\title{Degree Monotone Paths}
\date{}
\begin{document}
\newtheorem{theorem}{Theorem}[section]
\newtheorem{definition}{Definition}[section]
\newtheorem{proposition}[theorem]{Proposition}
\newtheorem{corollary}[theorem]{Corollary}
\newtheorem{lemma}[theorem]{Lemma}

\newtheoremstyle{break}
  {}
  {}
  {\itshape}
  {}
  {\bfseries}
  {.}
  {\newline}
  {}

\theoremstyle{break}

\newtheorem{propskip}[theorem]{Proposition}
\DeclareGraphicsExtensions{.pdf,.png,.jpg}
\author{Yair Caro \\ Department of Mathematics\\ University of Haifa-Oranim \\ Israel \and Josef  Lauri\\ Department of Mathematics \\ University of Malta
\\ Malta \and Christina Zarb \\Department of Mathematics \\University of Malta \\Malta }

\maketitle
\begin{abstract}
We shall study degree-monotone paths in graphs, a problem inspired by the celebrated theorem  of  Erd{\H{o}}s-Szekeres concerning  the longest monotone subsequence of a given sequence of numbers.

A path P in a graph G is said to be a degree monotone path if the sequence of degrees of the vertices in P in the order they appear in P is monotonic.

In this paper we shall consider these three problem related to this parameter:
\begin{enumerate}
\item{ Find bounds on $mp(G)$ in terms of other parameters of $G$.}
\item{Study $f(n,k)$ defined to be the maximum number of edges in a graph on $n$ vertices with $mp(G) < k$.}
\item{Estimate the minimum and the maximum over all graph $G$ on $n$ vertices of $mp(G)+mp(\overline{G})$.}
\end{enumerate}

For the first problem our main tool will be the Gallai-Roy Theorem on directed paths and chromatic number.  We shall also consider in some detail maximal planar and maximal outerplanar graphs in order to investigate the sharpness of the bounds obtained.  For the second problem we establish a close link between $f(n,k)$ and the classical Turan numbers.  For the third problem we establish some Nordhaus-Gaddum type of inequalities.  We conclude by indicating some open problems which our results point to.

\end{abstract}

\section{Introduction}

We shall study degree-monotone paths in graphs, a problem inspired by the celebrated theorem  of  Erd{\H{o}}s-Szekeres concerning  the longest monotone subsequence of a given sequence of numbers \cite{fox2012erdos}.  

A path $P=v_1\ldots v_k$ in a graph $G$ is said to be a \emph{degree monotone path} of length $k$ if $deg(v_1) \leq deg(v_2) \leq \ldots \leq deg(v_k)$ or $deg(v_1) \geq deg(v_2) \geq \ldots \geq deg(v_k)$.

Such degree monotone paths are called uphill and downhill paths in  recent papers based upon  Ph.D. theses that were recently submitted for publication \cite{downhill2,downhill3,updownhilldom}.  The authors' main motivation was to consider problems about downhill and uphill domination, another interesting variant of the many different ideas in dominations that are being studied \cite{dom_survey,haynes1998fundamentals,haynes1998domination}.  

The paper \cite{updownhilldom} explicitly addresses the problem:  what can be said about the maximum length of a downhill, respectively uphill, path in a graph $G$.  In this paper we use the term degree monotone path to unify the idea of an uphill and downhill path.  We define the following parameter:  given a connected graph $G$, $mp(G)$ is the length of the largest degree monotone path in $G$.  The following natural problems concerning this parameter arise:
\begin{enumerate}
\item{Finding bounds on $mp(G)$ in terms of other parameters of the graph G.}

\item{Define $ f(n,k) =\max\{ |E(G)|: |G| = n, mp(G) < k\}$, be the maximum number of edges in a graph on $n$ vertices with no degree monotone path of length $k$.  We are interested in studying $f(n,k)$.}

\item{Estimating $\min\{mp(G) + mp(\overline{G})\}$ and $\max\{mp(G) + mp(\overline{G})\}$ over all graphs on $n$ vertices.  This is a Nordhaus-Gaddum type of problem for the parameter $mp(G)$.  An excellent survey of such results is found in \cite{NGsurvey1}}

\item{How does the value of $mp(G)$ change when $G$ is subjected to various graph operations such as edge/vertex deletion or addition?}
\end{enumerate}

As we shall see, Problem 1  can be treated via the Gallai-Roy theorem \cite{dougwest}, through which we show that $mp(G) \geq \chi(G) \geq \omega(G)$, where $\chi(G)$ is the chromatic number of $G$ and $\omega(G)$ is the clique number, for every graph, and that in general, this is best possible.

We then consider maximal outerplanar graphs, and show, using light-edge techniques \cite{Fabrici,hackmannkemnitz}, that for every maximal outerplanar graph $G$ on at least five vertices, $mp(G) \geq 4$, and this is best possible --- thus showing that the bound $mp(G) \geq \chi(G)$ can be slightly improved for this class of graphs.  On the other hand,  we show by construction that there exist arbitrarily large  maximal planar graphs $G$ with $\chi(G)=mp(G)=4$.

For Problem 2 we recall the definition of the Turan numbers \cite{bollobas2004extremal}: \[t(n,k)  = \max \{|E(G)|: |G| = n, G \mbox{ contains no copy of } K_k\}\]  We shall establish a close connection between $t(n,k)$ and $f(n,k)$, using families of graphs already considered in a remotely related problem by M.Albertson \cite{Albertson92}.

For Problem 3, we get a sharp bound for $\max\{mp(G) + mp(\overline{G})\}$  and close bounds for $\min\{mp(G) + mp(\overline{G})\}$, using the results obtained for Problem 1, and the Nordhous-Gaddum bounds for the chromatic number \cite{nordhaus1956complementary,dougwest}.

Problem 4 involves many possible operations on graphs, hence we defer these types of result to a later paper \cite{CLZ5dmp}.

Lastly, from a complexity point of view, we observe that, since for a regular graph $G$, $mp(G)$ is equivalent to finding the longest path in $G$, it is evident that already in regular graphs, finding $mp(G)$ is an $NP$-hard problem \cite{karger1997approximating}.

\section{Bounds for $mp(G)$}

In this section, we consider lower bounds and upper bounds for $mp(G)$ for a general graph $G$, as well as for the class of maximal outerplanar graphs.  

We first consider the relationship between $mp(G)$ and $\chi(G)$.  For this we use the well-known Gallai-Roy Theorem \cite{dougwest} :
\begin{theorem} [Gallai-Roy] \label{gallai-roy}
In any orientation of a graph $G$, the length of the longest directed path is at least $\chi(G)$.
\end{theorem}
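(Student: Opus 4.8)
The plan is to fix an arbitrary orientation $D$ of $G$ and manufacture from it a proper vertex colouring of $G$ whose number of colour classes is bounded above by the number of vertices on a longest directed path. Since any proper colouring uses at least $\chi(G)$ colours, this immediately yields the stated inequality. The colouring will be defined by assigning to each vertex the length of a longest directed path \emph{ending} at that vertex, so the core of the argument is to show that along any arc the value of this function must change.

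First I would address the one technical nuisance: the orientation $D$ need not be acyclic (a cyclically oriented triangle already has a directed cycle), and in the presence of directed cycles the quantity ``longest directed path ending at $v$'' is not well defined. To remedy this I would pass to a spanning subdigraph $D'$ of $D$ that is acyclic and \emph{maximal} with this property, i.e.\ obtained by deleting a set of arcs so that $D'$ has no directed cycle but the re-insertion of any deleted arc creates one. Such a $D'$ exists by deleting arcs greedily. Because $D'$ is finite and acyclic, for each vertex $v$ the function
\[
c(v) = \text{number of vertices on a longest directed path of } D' \text{ ending at } v
\]
is a well-defined positive integer.

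Next I would verify that $c$ is a proper colouring of the underlying graph $G$; that is, $c(u)\neq c(v)$ whenever $uv$ is an edge of $G$, oriented say as the arc $u\to v$ in $D$. There are two cases. If the arc $u\to v$ lies in $D'$, then any longest directed path of $D'$ ending at $u$ extends along this arc to a directed path ending at $v$, whence $c(v)\geq c(u)+1$ and the two values differ. If instead the arc was deleted, then by maximality of $D'$ its re-insertion creates a directed cycle, so $D'$ already contains a directed path from $v$ to $u$; applying the first case repeatedly along the arcs of that path shows $c$ strictly increases from $v$ to $u$, giving $c(u)>c(v)$. In both cases $c(u)\neq c(v)$, so $c$ is proper.

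Finally I would assemble the conclusion. The colour set is $\{1,2,\ldots,m\}$ where $m=\max_v c(v)$, so $c$ uses exactly $m$ colours and hence $m\geq\chi(G)$. But $m$ equals the number of vertices on some longest directed path of $D'$, and every directed path of $D'$ is also a directed path of $D$; therefore the longest directed path of $D$ has at least $m\geq\chi(G)$ vertices, as required. I expect the main obstacle to be the second case of the properness check: one must justify carefully that deleting arcs to a \emph{maximal} acyclic subdigraph forces a reverse directed path for every removed arc, and that the monotonicity of $c$ established in the first case propagates along an entire such path rather than merely across a single arc.
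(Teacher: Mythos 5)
The paper does not prove this statement at all: it is quoted as the classical Gallai--Roy theorem with a citation to West's textbook, so there is no in-paper argument to compare yours against. Your proposal is the standard proof of that theorem (essentially the one in the cited reference): pass to a maximal acyclic spanning subdigraph $D'$, colour each vertex by the order of a longest directed path of $D'$ ending there, and check properness by the two cases (arc kept, arc deleted). The argument is correct, including the observation that for a deleted arc $u\to v$ maximality forces a directed $v$--$u$ path in $D'$ along which $c$ strictly increases. Note also that the paper's convention takes ``length'' of a path to be its number of vertices, which is exactly the quantity your colouring bounds, so the statement you prove matches the statement as used here.

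One small point you should make explicit in the first case of the properness check: when you extend a longest directed path $P$ ending at $u$ by the arc $u\to v$, you need $v\notin P$ for the result to be a path rather than a closed walk. This follows because $D'$ is acyclic and contains the arc $u\to v$: if $P$ contained $v$, its subpath from $v$ to $u$ together with that arc would be a directed cycle in $D'$. This is a one-line repair, not a flaw in the approach.
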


\begin{theorem} \label{mp_chi}
For every graph $G$, $mp(G) \geq \chi(G)$, and the bound is sharp.
\end{theorem}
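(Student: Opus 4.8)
The plan is to construct a specific orientation of $G$ in which every directed path is automatically degree monotone, and then invoke the Gallai--Roy Theorem (Theorem~\ref{gallai-roy}) to guarantee a long such path.

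First I would orient $G$ as follows. For each edge $uv$, if $\deg(u) < \deg(v)$ orient it as $u \to v$; if $\deg(u) = \deg(v)$, fix in advance an arbitrary linear ordering $\prec$ of $V(G)$ (for instance by vertex label) and orient the edge from the $\prec$-smaller endpoint to the $\prec$-larger one. This produces a well-defined orientation $D$ of $G$. The key observation is that along any directed edge $u \to v$ of $D$ we have $\deg(u) \le \deg(v)$: either the degree strictly increases, or it stays equal.

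Consequently, for any directed path $v_1 \to v_2 \to \cdots \to v_m$ in $D$ the degree sequence $\deg(v_1), \deg(v_2), \ldots, \deg(v_m)$ is non-decreasing, so the underlying path $v_1 v_2 \cdots v_m$ is a degree monotone path of $G$ on $m$ vertices. (One can also check that $D$ is acyclic, since a directed cycle would force all its vertices to have equal degree and then to respect the linear order $\prec$, which is impossible; but acyclicity is not actually needed, as Gallai--Roy applies to every orientation.) Applying Theorem~\ref{gallai-roy} to $D$ yields a directed path on at least $\chi(G)$ vertices, which by the above is a degree monotone path, and hence $mp(G) \ge \chi(G)$.

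Finally, for sharpness I would exhibit the complete graphs $K_n$. Since $K_n$ is regular, every path is trivially degree monotone, so $mp(K_n)$ equals the number of vertices on a longest path, namely $n$; and $\chi(K_n) = n$, giving $mp(K_n) = \chi(K_n)$. The only point requiring care in the whole argument is the treatment of edges joining vertices of equal degree: the fixed tie-breaking order $\prec$ is exactly what makes the orientation well defined while preserving the monotonicity of degrees along every directed path.
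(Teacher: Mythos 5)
Your proof of the inequality is essentially the paper's: orient each edge toward the endpoint of larger (or equal) degree and apply Gallai--Roy, so that the guaranteed directed path on at least $\chi(G)$ vertices is degree monotone. Your explicit tie-breaking order $\prec$ for edges between equal-degree vertices is a small improvement in precision over the paper, whose rule ``orient $uv$ from $u$ to $v$ if $\deg(u)\le\deg(v)$'' is ambiguous on such edges; as you note, the ambiguity is harmless since any resolution keeps degrees non-decreasing along directed paths. Where you genuinely diverge is the sharpness witness: you use $K_n$, which does satisfy $mp(K_n)=\chi(K_n)=n$ and so literally establishes sharpness, whereas the paper constructs, for each $k$, complete multipartite graphs on $n$ vertices with $k$ parts of pairwise distinct sizes, for which $\chi=k$ and every degree monotone path meets each part at most once, giving $mp(G)=k$. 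The paper's example is doing more work: it shows equality can hold with $mp(G)=\chi(G)=k$ fixed while $n$ grows arbitrarily (so the bound is not just attained on cliques, where every path is trivially monotone), and these same multipartite graphs are reused later for the $f(n,k)$ lower bounds and the Nordhaus--Gaddum discussion. Your argument is correct as a proof of the stated theorem, but the clique example is the degenerate case of sharpness.
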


\begin{proof}
Let us consider a graph $G$ and let us orient it such the edge $uv$ is oriented from $u$ to $v$ is $deg(u) \leq deg(v)$.  By Theorem \ref{gallai-roy},  there is a directed path of length at least $\chi(G)$.  This path is degree monotone by definition of the orientation, and hence $mp(G) \geq \chi(G)$.

The bound is achieved by the following construction:  consider a complete multipartite graph $G$ having $k$ parts all of different sizes, ranging from $\lceil \frac{n}{k} \rceil - \lceil \frac{k}{2} \rceil$ to $\lceil \frac{n}{k} \rceil + \lfloor \frac{k}{2} \rfloor$ .  Then $\chi=k$.  Now let the parts be $X_1,X_2, \ldots,X_k$ such that $|X_1| < |X_2| < \ldots <|X_k|$ - hence the degrees of vertices in $X_1$ are larger than those of vertices in $X_2$, which in turn are larger than those of vertices  in $X_3$  and so on.  Then if we start from a vertex in $X_1$, then take a vertex from $X_2$ and so on, it is clear that we can take exactly one vertex from each part in this order, creating a degree monotone path in non-increasing order.  Hence $mp(G)=k$.

\end{proof}

\begin{corollary}
 For every graph $G$ on $n$ vertices:
\begin{enumerate}
\item{$mp(G) \geq \omega(G)$ and $mp(G) \geq  \frac{n}{\alpha(G)}$}
\item{$\max \{ mp(G),\alpha(G)\} \geq \sqrt{n}$}
\item{If $G$ is $K_{1,r}$-free, for $r \geq 3$, then \[mp(G) \geq \left \lceil \frac{\Delta}{r-1} \right \rceil +1 \geq \frac{\Delta +r -1}{r-1}.\]}
\end{enumerate}
\end{corollary}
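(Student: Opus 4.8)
The plan is to derive all three parts from Theorem \ref{mp_chi}, which gives $mp(G) \ge \chi(G)$, together with two standard chromatic inequalities. For part (1), the first assertion is immediate from the chain $mp(G) \ge \chi(G) \ge \omega(G)$, the last inequality holding because a clique on $\omega(G)$ vertices forces at least that many colours. For the second assertion of part (1), I would use that in a proper colouring with $\chi(G)$ colours each colour class is independent and so has at most $\alpha(G)$ vertices; since the classes partition $V(G)$ this gives $\chi(G)\,\alpha(G) \ge n$, and combining with Theorem \ref{mp_chi} yields $mp(G) \ge \chi(G) \ge n/\alpha(G)$.

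Part (2) then follows formally: part (1) gives $mp(G)\,\alpha(G) \ge n$, so if both $mp(G)$ and $\alpha(G)$ were strictly below $\sqrt{n}$ their product would be below $n$, a contradiction; hence $\max\{mp(G),\alpha(G)\} \ge \sqrt{n}$.

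For part (3), the harder one, I would localise to the neighbourhood of a vertex $v$ of maximum degree $\Delta$. Set $H = G[N(v)]$, so $|H| = \Delta$. The $K_{1,r}$-free hypothesis (no induced star with $r$ mutually non-adjacent leaves) says exactly that $N(v)$ contains no independent set of size $r$, i.e.\ $\alpha(H) \le r-1$. By the same colour-class counting as above, $\chi(H) \ge |H|/\alpha(H) \ge \Delta/(r-1)$, and since $\chi(H)$ is an integer, $\chi(H) \ge \lceil \Delta/(r-1)\rceil$. I would then apply Gallai-Roy (Theorem \ref{gallai-roy}) to $H$ to extract a long path and close it off with $v$.

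The step I expect to need the most care is that the required path must be degree-monotone with respect to degrees in $G$, not in the subgraph $H$; so I cannot simply invoke Theorem \ref{mp_chi} for $H$. Instead I would orient every edge of $H$ from lower to higher $G$-degree (breaking ties by a fixed vertex order), exactly as in the proof of Theorem \ref{mp_chi} but using $deg_G$ throughout. Gallai-Roy then yields a directed path $u_1 \to \cdots \to u_t$ in $H$ with $t \ge \chi(H) \ge \lceil \Delta/(r-1)\rceil$ and $deg_G(u_1) \le \cdots \le deg_G(u_t)$. Since every $u_i$ lies in $N(v)$ and $v$ has maximum degree, we have $deg_G(u_t) \le \Delta = deg_G(v)$ and $u_t v \in E(G)$, so appending $v$ produces a degree-monotone path $u_1 \ldots u_t v$ in $G$ of length $t+1 \ge \lceil \Delta/(r-1)\rceil + 1$. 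The closing inequality $\lceil \Delta/(r-1)\rceil + 1 \ge (\Delta + r - 1)/(r-1)$ is then routine arithmetic obtained by dropping the ceiling.
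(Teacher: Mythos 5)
Your proposal is correct in all three parts. Parts (1) and (2) follow the paper's proof exactly: the chain $mp(G)\ge\chi(G)\ge\omega(G)$, the colour-class count giving $\chi(G)\ge n/\alpha(G)$, and the product argument for $\max\{mp(G),\alpha(G)\}\ge\sqrt{n}$. For part (3), however, you take a genuinely different and more elaborate route. The paper stays entirely at the level of the chromatic number of $G$: since $G$ is $K_{1,r}$-free, a vertex $v$ of degree $\Delta$ has at most $r-1$ neighbours in any colour class, so its neighbourhood already forces $\lceil\Delta/(r-1)\rceil$ classes, and the class of $v$ itself supplies one more, giving $\chi(G)\ge\lceil\Delta/(r-1)\rceil+1$; a single invocation of Theorem \ref{mp_chi} then finishes. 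You instead localise to $H=G[N(v)]$, bound $\alpha(H)\le r-1$, deduce $\chi(H)\ge\lceil\Delta/(r-1)\rceil$, re-run the degree orientation and Gallai--Roy argument on $H$ using $G$-degrees, and obtain the $+1$ by physically appending $v$ to the resulting path. Your concern that the path must be monotone in $deg_G$ rather than $deg_H$ is legitimate and you handle it correctly, but it is self-inflicted: the paper's global bound makes the issue vanish, since Theorem \ref{mp_chi} is applied once to all of $G$. What your version buys in exchange is constructive information --- it shows the degree monotone path can be found inside $N(v)\cup\{v\}$ and ending at a maximum-degree vertex --- which the paper's argument does not provide.
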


\begin{proof}
\mbox{}\\*

\noindent 1.  \indent It is well known that $\chi(G) \geq \omega(G)$ and also $\chi(G) \geq \frac{n}{\alpha(G)}$, and hence by Proposition \ref{mp_chi} the results follow.
\medskip

\noindent 2. \indent We observe that if $\alpha(G) \leq \sqrt{n}$, then $mp(G) \geq \chi(G) \geq \frac{n}{\alpha(G)} \geq \sqrt{n}$, and the result follows.
\medskip

\noindent 3. \indent We observe that if $G$ is $K_{1,r}$-free, then $\chi(G) \geq \left \lceil \frac{\Delta}{r-1} \right \rceil +1 \geq \frac{\Delta +r -1}{r-1}$, since it is clear that a vertex $v$ it can have at most $r-1$ neighbours in any colour class. Therefore letting $v$  have degree $\Delta$, it follows that there must be at least $\left \lceil \frac{\Delta}{r-1} \right \rceil +1 $ colour classes, and hence by Theorem \ref{mp_chi}, \[ mp(G) \geq \chi(G) \geq  \left \lceil \frac{\Delta}{r-1} \right \rceil +1 \geq \frac{\Delta +r -1}{r-1}.\]
\end{proof}

 It is clear that $mp(G)=1$ if and only if $G$ has no edges.  So we now give a characterization of graphs having $mp(G)=2$.  One can see that, for fixed $k$, deciding whether a graph $G$ on $n$ vertices has $mp(G)=k$ can be done by a brute-force algorithm checking all the paths of length $k$ to find one which is a degree monotone path, and checking all paths of length $k+1$ to verify that none of these is a degree monotone path.  The number of paths to be checked  is bounded by $O(n^{k+1})$.

\begin{lemma} \label{mp_two}
Let $G$ be a connected graph on $n \geq 3$ vertices with $mp(G)=2$.  Let $e=(x,y)$ be an edge.  Then
\begin{enumerate}
\item{$deg(x) \neq deg(y)$.}
\item{If $deg(x) > deg(y)$, then for every vertex $u$ in $N(y)$, $deg(u) > deg(y)$.}
\item{If $deg(x) > deg(y)$, then for every vertex $u$ in $N(x)$, $deg(u) < deg(x)$.}
\end{enumerate}
\end{lemma}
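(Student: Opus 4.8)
The plan is to unpack what the hypothesis $mp(G)=2$ says about three-vertex paths and then apply it to suitable paths through the edge $e=(x,y)$. Since a path of length $3$ consists of three distinct vertices $a-b-c$, the assumption $mp(G)=2$ asserts that no such path is degree monotone. I would first record the elementary fact that a triple $deg(a),deg(b),deg(c)$ fails to be monotone in either direction precisely when the middle value is a strict local extremum, that is, either $deg(b)>deg(a)$ and $deg(b)>deg(c)$, or $deg(b)<deg(a)$ and $deg(b)<deg(c)$. In particular, whenever two consecutive degrees along a path are equal, the triple is automatically monotone. This reformulation does essentially all the work.

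For part 1, I would argue by contradiction, assuming $deg(x)=deg(y)$. Because $G$ is connected with $n\ge 3$, the edge $xy$ cannot form an isolated component, so at least one endpoint, say $x$, has a further neighbour $z\notin\{x,y\}$. Then $z-x-y$ is a genuine three-vertex path, and since its last two degrees coincide the triple is monotone, contradicting $mp(G)=2$; hence $deg(x)\ne deg(y)$.

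For parts 2 and 3, assume $deg(x)>deg(y)$ and take an arbitrary $u\in N(y)$ (respectively $u\in N(x)$). The cases $u=x$ and $u=y$ follow immediately from the hypothesis, so I suppose $u$ is distinct from both endpoints. For part 2 I consider the path $x-y-u$: the triple $deg(x),deg(y),deg(u)$ already satisfies $deg(x)>deg(y)$, so non-monotonicity forces $y$ to be a strict local minimum, giving $deg(u)>deg(y)$. For part 3 I consider the path $y-x-u$: here $deg(y)<deg(x)$, so non-monotonicity forces $x$ to be a strict local maximum, giving $deg(u)<deg(x)$.

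The only genuine subtlety, and the step I expect to be the main obstacle, is the existence argument in part 1: one must use connectivity together with $n\ge 3$ to rule out the degenerate possibility that both $x$ and $y$ have degree $1$ (an isolated edge), which is the single configuration in which no third vertex $z$ is available. Everything else is a direct application of the local-extremum reformulation, and the remaining care is only in checking that the three vertices in each path are genuinely distinct, so that each is a legitimate path of length $3$.
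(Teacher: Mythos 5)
Your proof is correct and follows essentially the same route as the paper's: in each part one exhibits a three-vertex path through the edge $xy$ and observes that the degree triple would be monotone unless the middle degree is a strict local extremum (the paper phrases this by directly checking the two orderings, and your part 1 even fixes a small slip in the paper, which writes $u$--$x$--$y$ for a vertex $u$ only known to be adjacent to $y$). The local-extremum reformulation and the explicit connectivity argument for the existence of a third vertex are just cleaner packagings of the same idea, so no further comment is needed.
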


\begin{proof}
1. \indent  Suppose $deg(x)=deg(y)$, then since $G$ is connected we may assume without loss of generality that there is a vertex $u$ adjacent to $y$.  If $deg(u) \geq deg(y)$ then the path $x-y-u$ is a degree monotone path of length 3, otherwise $u-x-y$ is such a path, contradicting the fact the $mp(G)=2$.

\noindent 2. \indent Assume $deg(x) > deg(y)$.  Clearly, $x$ is in $N(y)$ and $deg(x) > deg(y)$.  If there is another vertex $w$ in $N(y)$ and $deg(w) \leq deg(y)$, then $x-y-w$ is a degree monotone path of length 3, a contradiction.

\noindent 3. \indent Assume $deg(x) > deg(y)$.  Clearly, $y$ is in $N(x)$ and $deg(y) < deg(x)$.  if there is another vertex $w$ in $N(x)$ and $deg(w) \geq deg(y)$, then $w-x-y$ is a degree monotone path of length 3, again a contradiction.

\end{proof}

\begin{theorem}
Let $G$ be a connected graph on $n \geq 3$ vertices.  Then $mp(G)=2$ if and only if $G$ is a bipartite graph with partition $A \cup B=V(G)$ such that $\forall x \in A$, $deg(x) > \max \{deg(y): y \in N(x)\}$.
\end{theorem}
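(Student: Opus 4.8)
The plan is to prove the two implications separately, reusing the structural facts already recorded in Lemma~\ref{mp_two} for the forward direction and arguing directly for the converse. For the forward direction I would assume $mp(G)=2$ and build the bipartition from the ``local extremum'' behaviour that the lemma forces. Call a vertex $v$ a \emph{local maximum} if $deg(v) > deg(u)$ for every $u \in N(v)$, and a \emph{local minimum} if $deg(v) < deg(u)$ for every $u \in N(v)$. Since $G$ is connected on $n \geq 3$ vertices, no vertex is isolated, so every vertex has at least one neighbour, and part~1 of the lemma guarantees that this neighbour has a different degree. If $v$ has a neighbour of smaller degree, part~3 forces \emph{all} its neighbours to have smaller degree, so $v$ is a local maximum; symmetrically, if $v$ has a neighbour of larger degree, part~2 makes $v$ a local minimum. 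These two cases are mutually exclusive and exhaustive, so setting $A$ to be the set of local maxima and $B$ the set of local minima yields a partition $A \cup B = V(G)$. Finally, every edge $xy$ satisfies $deg(x) \neq deg(y)$ by part~1, and by the above its higher-degree endpoint lies in $A$ and its lower-degree endpoint in $B$; hence $G$ is bipartite with parts $A$ and $B$, and by construction every $x \in A$ satisfies $deg(x) > \max\{deg(y) : y \in N(x)\}$.

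For the converse I would first note that a connected graph on $n \geq 3$ vertices contains an edge, so $mp(G) \geq 2$, and it then remains to forbid a degree monotone path on three vertices (any longer monotone path contains such a subpath, so ruling these out yields $mp(G) \le 2$). Take any path $u\,v\,w$. Because $G$ is bipartite with parts $A,B$, its middle vertex $v$ lies in one part and $u,w$ in the other. If $v \in A$ then $deg(v) > deg(u)$ and $deg(v) > deg(w)$, so the middle degree strictly exceeds both ends and the triple $(deg(u),deg(v),deg(w))$ is neither non-increasing nor non-decreasing. If $v \in B$ then $u,w \in A$, giving $deg(u) > deg(v)$ and $deg(w) > deg(v)$, so the middle degree is a strict minimum and again the triple is not monotone. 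In either case no path on three vertices is degree monotone, so $mp(G)=2$.

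The point requiring the most care is the well-definedness of the bipartition in the forward direction: one must verify that the local-maximum and local-minimum classes are genuinely disjoint and cover all of $V(G)$. This is exactly where connectivity (to exclude isolated vertices) and the three parts of Lemma~\ref{mp_two} (to force a clean max/min dichotomy along every edge) are essential; once that dichotomy is in place, both the bipartiteness and the degree condition follow immediately, and the converse is a short case analysis.
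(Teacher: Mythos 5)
Your proof is correct and follows essentially the same route as the paper: both directions rest on the three parts of Lemma~\ref{mp_two} (yielding the local-maximum/local-minimum dichotomy that defines $A$ and $B$) and on the same three-vertex case analysis for the converse. The only difference is that you obtain bipartiteness directly from that dichotomy, whereas the paper first invokes $mp(G) \geq \chi(G)$ to conclude $\chi(G)=2$; your version shows the appeal to Theorem~\ref{mp_chi} is dispensable here, which is a mild simplification rather than a genuinely different argument.
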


\begin{proof}
Suppose $mp(G) = 2$.  Then by Proposition \ref{mp_chi}, $mp(G) \geq \chi(G)$, hence we infer that $\chi(G) = 2$, namely $G$ is bipartite.

For every edge $e=(x,y)$ we know by Lemma \ref{mp_two} that, without loss of generality, $deg(x) > deg(y)$ and also that, in this case, $deg(x) > deg(u)$ $\forall u \in N(x)$.
 
So in every edge $e=(x,y)$ with $deg(x) > deg(y)$,  let $x \in A$ and $y \in B$.
 
By Lemma \ref{mp_two}, this is a well-defined partition with $A \cup B = V(G)$, as every vertex is either always of the minimum degree in its closed neighborhood  or the maximum degree in its closed neighborhood.  Clearly, with this partition, there is no edge between any two vertices in the same part, and $ \forall x \in A$, $deg(x) > \max\{deg(y) : y \in  N(x)\}$.

Suppose now that $G$ is a bipartite graph with a partition $A \cup B  = V(G)$  such that $\forall x \in A$, $deg(x) > \max\{ deg(y) : y \in N(x)\}$.
 
If there is a degree monotone path of length 3 then either it starts with $x \in A$ then $y\in B$ and then $z \in A$, or else it starts with $y \in B$ then $x \in A$ and then $z \in B$.  In the former case, $deg(y) < \min\{ deg(x) ,deg(z) \}$ and hence the path is not degree monotone , and in the latter case $deg(x) > \max\{ deg(y) ,deg(z) \}$, and again the path is not degree monotone.  This completes the proof.
\end{proof}

\subsection{Maximal Outerplanar Graphs}

While Proposition \ref{mp_chi} is sharp in general, it is of interest to find cases of graphs in which, non-trivially, $mp(G) > \chi(G)$.

One such family is that of \emph{maximal outerplanar graphs} (MOPs), for which it is well known that $\chi=3$. 

In the sequel, we shall use the following result from ``light-edge theory" \cite{Fabrici,hackmannkemnitz}.

\begin{theorem} [Hackman-Kemnitz] \label{HandK}
Every outerplanar graph $G$ of minimum degree 2 contains an edge $vw$ with $deg(v)=2$ and $deg(w) \leq 3$, or a $3$-path $v,w,x$ with $deg(v)=2$, $deg(w)=4$ and $deg(x)=2$.
\end{theorem}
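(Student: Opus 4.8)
The plan is to treat this as a standard light-edge (discharging) result, so I would first reduce to the $2$-connected case and then run a charge-counting argument based on Euler's formula.

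\emph{Reduction.} Since the minimum degree is $2$, $G$ has no leaves, so in its block tree every leaf block is $2$-connected on at least three vertices (a $K_2$ leaf block would force a degree-$1$ vertex). Inside a leaf block $B$ every vertex other than its unique cut vertex $c$ keeps its $G$-degree, and $B$ is itself outerplanar with minimum degree $\geq 2$. It therefore suffices to locate the required configuration inside such a $B$, using only vertices other than $c$ as the degree-$2$ and degree-$4$ vertices of the configuration; the abundance of low-degree vertices forced below (at least two ``ears'') guarantees this can be arranged. So assume $G$ is $2$-connected. Embedding it with all vertices on the outer face, the outer boundary is a Hamilton cycle $C$ of length $n$, every bounded face has length $\geq 3$, chords are non-crossing, and the weak dual is a tree.

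\emph{Structure from the negation.} Suppose $G$ contains neither an edge $vw$ with $deg(v)=2$, $deg(w)\leq 3$ nor a $3$-path $v,w,x$ with degrees $2,4,2$. Then no two degree-$2$ vertices are adjacent and no degree-$2$ vertex is adjacent to a degree-$3$ vertex (either would be a light edge), so \emph{every degree-$2$ vertex has both neighbours of degree $\geq 4$}; moreover \emph{every degree-$4$ vertex has at most one degree-$2$ neighbour} (two would create the forbidden $2,4,2$ path). Reading off a leaf of the weak dual, one further sees that every ``ear'' (a bounded face meeting the rest of the graph in a single chord) must be a triangle whose apex is a degree-$2$ vertex and whose two base vertices have degree $\geq 4$, since a longer ear would expose two adjacent degree-$2$ vertices.

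\emph{Discharging.} I would give each vertex $v$ the charge $deg(v)-4$ and each face $f$ the charge $deg(f)-4$; by Euler's formula the total charge equals $-8$. The only initially negative objects are vertices of degree $2$ and $3$ and triangular faces, while the outer face carries the large surplus $n-4$ and the high-degree vertices and faces carry the rest. The discharging routes this surplus inward: the outer face feeds each incident degree-$2$ and degree-$3$ vertex, and high-degree vertices and faces feed adjacent triangular faces, the exact amounts chosen so that, using the two structural facts above, every object ends with charge $\geq 0$. This forces the total to be $\geq 0$, contradicting $-8$, and the theorem follows.

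\emph{Main obstacle.} The delicate point will be verifying that the recipients of charge do not themselves go negative, and in particular that degree-$4$ vertices and triangular faces end nonnegative. The one configuration that resists every balanced rule is a degree-$4$ vertex flanked by two degree-$2$ vertices --- which is exactly the light $3$-path the statement allows as its second alternative, so excluding it is precisely what makes the argument close. Ruling out the remaining accumulations of deficit relies on genuinely outerplanar features (the graph is $K_4$-minor-free and its weak dual is a tree, so triangular faces and low-degree vertices cannot cluster arbitrarily), and pinning down the finitely many local cases there is where the real work lies.
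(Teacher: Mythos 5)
First, a point of reference: the paper does not prove this statement at all --- it is quoted verbatim as a known result of Hackmann and Kemnitz from the light-edge literature, so there is no in-paper proof to compare yours against. Judged on its own terms, your submission is a proof \emph{plan} rather than a proof, and the gap is exactly where you say it is. The preliminary material is sound: the reduction to the $2$-connected case, the charge assignment $\mathrm{ch}(v)=\deg(v)-4$, $\mathrm{ch}(f)=\deg(f)-4$ with total $-8$ by Euler's formula, and the three structural consequences of the negation (every degree-$2$ vertex has both neighbours of degree $\ge 4$, every degree-$4$ vertex has at most one degree-$2$ neighbour, every ear is a triangle with a degree-$2$ apex and degree-$\ge 4$ base) are all correct. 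But the discharging rules themselves are never stated: ``the exact amounts chosen so that \ldots every object ends with charge $\ge 0$'' is precisely the content of the theorem, not a step of its proof. Without explicit rules and the accompanying finite case check that no donor goes negative, nothing has been established; the contradiction with $-8$ is asserted, not derived.

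Two smaller points deserve attention if you do carry this out. In the block reduction, you must find the configuration among vertices whose $B$-degree equals their $G$-degree, i.e.\ avoiding the cut vertex $c$; the remark that ``at least two ears'' makes this possible needs an argument, since an ear may well have $c$ as one of its base vertices (you need either a leaf block that is a cycle, handled directly, or two leaf faces of the weak dual and a check that at least one yields a configuration not using $c$ as a degree-$2$ or degree-$4$ vertex of the pattern). Second, the amount of surplus available is tight: the outer face carries only $n-4$, while in a maximal outerplanar graph the triangles alone carry a deficit of $n-2$ and each degree-$2$ vertex another $2$, so the rules must also tap high-degree vertices, and verifying that a degree-$4$ vertex can afford its donations is exactly where the excluded $2,4,2$ path enters. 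As it stands the argument is a plausible and standard-looking strategy with the decisive bookkeeping omitted, so it cannot be accepted as a proof.
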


It is well known that every MOP has at least two vertices of degree two, and that every MOP has a unique hamiltonian cycle, which bounds the exterior region, and that each interior region is a triangle.  An edge which is not on $C$ is called a chord.  If $n \geq 4$, no two vertices of degree two are adjacent.  An edge $e=(x,y)$ is said to be \emph{regular} if $deg(x)=deg(y)$.  We now prove the following lemma:

\begin{lemma} \label {common_neighbour}
Let $G$ be a MOP with $n \geq 3$.  Consider an edge $xy$.  Then if $xy$ is on the unique Hamiltonian cycle which bounds the exterior region, $x$ and $y$ have exactly one common neighbour, while if $xy$ is a chord, $x$ and $y$ have exactly two common neighbours.
\end{lemma}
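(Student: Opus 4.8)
The plan is to identify the common neighbours of the two endpoints of an edge with the triangular faces of the triangulation incident to that edge. I would start from the structural facts already recalled for MOPs: on $n \geq 3$ vertices such a graph is a triangulation of a polygon, so its planar embedding has the unique Hamiltonian cycle $C$ bounding the outer region and every interior region a triangle. The basic observation is that, for an edge $xy$, a vertex $z$ is a common neighbour of $x$ and $y$ exactly when $\{x,y,z\}$ spans a $3$-cycle of $G$, and distinct common neighbours give distinct such $3$-cycles. Hence counting common neighbours reduces to counting the triangles of $G$ that contain $xy$.

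The key step, which I expect to be the main obstacle, is to show that every $3$-cycle of a MOP is a \emph{facial} triangle. This is where outerplanarity must be used essentially. Suppose $T=xyz$ is a $3$-cycle; drawn in the embedding it is a simple closed (Jordan) curve, so it separates the plane into a bounded and an unbounded region. If $T$ were not a face, its bounded region would be properly subdivided; since $G$ is simple there is no further edge among $x,y,z$, so any subdividing edge must have an endpoint strictly inside $T$, forcing at least one vertex of $G$ to lie in the interior of $T$. But such a vertex would be cut off from the outer face by $T$, contradicting the defining property of an outerplanar embedding that all vertices lie on the boundary of the outer region. Therefore every $3$-cycle bounds an interior face, and the common neighbours of $x$ and $y$ are in bijection with the interior triangular faces incident to $xy$.

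It then remains to count the interior faces incident to $xy$, using that a MOP with $n\geq 3$ is $2$-connected, so each edge lies on exactly two faces of the embedding. If $xy$ lies on $C$, one incident face is the outer region and the other is a single interior triangle, giving exactly one common neighbour. If $xy$ is a chord, it is interior and both incident faces are interior triangles, giving exactly two common neighbours. This argument is uniform for all $n \geq 3$: for $n=3$ the graph is $K_3$, every edge lies on $C$, and its single interior triangle supplies the unique common neighbour, matching the boundary-edge case. Assembling the two cases yields the stated counts, with the only delicate point being the Jordan-curve and outerplanarity argument of the second paragraph.
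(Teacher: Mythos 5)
Your proof is correct, but it proceeds differently from the paper's. The paper argues each case directly from the facial structure: for a boundary edge $xy$ it produces the one common neighbour from the incident interior triangle and then excludes a second one by an ad hoc argument (one of $xz$, $yz$ is a chord and $xy$ bounds the exterior region), for a chord it reads off the two common neighbours from the two incident triangular faces, and it caps the count at two in general by observing that three common neighbours would force a $K_{2,3}$ subgraph, contradicting outerplanarity. You instead prove the single structural claim that every $3$-cycle of a MOP is a facial triangle (via the Jordan-curve argument: a vertex strictly inside the triangle could not lie on the outer boundary), establish a bijection between common neighbours of $x,y$ and interior triangular faces incident to $xy$, and then count using the fact that a $2$-connected plane graph has exactly two faces on each edge. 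Your route is more uniform and arguably more rigorous -- in particular it replaces the paper's somewhat terse exclusion of a second common neighbour for a boundary edge with a clean face count, and it makes no use of the $K_{2,3}$ observation -- at the cost of invoking the topological facts about plane embeddings (Jordan curve, two faces per edge) rather than the purely combinatorial $K_{2,3}$-freeness of outerplanar graphs. Both arguments are complete; just make sure you explicitly note that the $2$-connectivity needed for the two-faces-per-edge step follows from the Hamiltonian cycle, as you do.
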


\begin{proof}
Consider $xy$ an edge on the unique hamiltonian cycle $C$.  Then $xy$ bounds the exterior region as well as an interior triangle.  Hence $x$ and $y$ have a common neighbour $z$, which is on $C$.  If $n=3$, then $x,y,z$ are the only vertices and the graph is $K_3$.  Otherwise either $xz$ or $yz$ is a chord and hence $x$ and $y$ cannot have another common neighbour since they bound the exterior region.

On the other hand, if $xy$ is a chord, that is it is not on $C$ which bounds the exterior region, it must be on the boundary of two interior regions, which in turn must both be triangles.  Hence $x$ and $y$ have two common neighbours.  

Note that any two vertices cannot have more than two common neighbours, since otherwise the vertices and three common neighbours induce $K_{2,3}$ as a subgraph, and the graph is not outerplanar.

\end{proof}

It is easy to see that for $n=4$ and $n=5$, $mp(G)=n-1$.  However the situation is different for $n \geq 6$, as we show in the following Theorem.

\begin{theorem} \label {dm4}

Let $G$ be a MOP on $n \geq 6$ vertices. Then
\begin{enumerate}
\item{ $4 \leq mp(G) \leq n-1$.}
\item{There exist arbitrarily large MOPs with $mp(G)=4$.}
\item{There exist arbitrarily large MOPs with no regular edges and $mp(G)=4$.}
\end{enumerate}
\end{theorem}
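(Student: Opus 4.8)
The plan is to treat the three parts separately, handling the two bounds in part (1) first since they underlie everything else.

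For the upper bound $mp(G)\le n-1$ in part (1), I would show that a MOP on $n\ge 6$ vertices has no degree monotone \emph{Hamiltonian} path. Suppose $v_1\cdots v_n$ were such a path, oriented (without loss of generality) so that $deg(v_1)\le\cdots\le deg(v_n)$. Since the minimum degree of a MOP is $2$, we have $deg(v_1)=2$; but a MOP has at least two vertices of degree $2$, so some $v_j$ with $j\ge 2$ also has degree $2$, forcing $deg(v_{j-1})\le deg(v_j)=2$ and hence $deg(v_{j-1})=2$. Then $v_{j-1}v_j$ is an edge between two vertices of degree $2$, contradicting the fact that no two degree-$2$ vertices are adjacent when $n\ge 4$. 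Thus $mp(G)\le n-1$.

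For the lower bound $mp(G)\ge 4$ I would use Theorem \ref{HandK} together with Lemma \ref{common_neighbour}. A MOP has minimum degree $2$, so Hackman--Kemnitz applies and yields one of two local configurations. \emph{First}, if there is an edge $vw$ with $deg(v)=2$ and $deg(w)\le 3$, then since $n\ge 6$ no two degree-$2$ vertices are adjacent, so $deg(w)=3$. Writing $u$ for the second neighbour of $v$, the triangle $uvw$ is an ear and $uw$ is a chord, whose second common neighbour $p$ exists by Lemma \ref{common_neighbour}. Tracing the Hamiltonian cycle around $v$ shows $deg(u)\ge 3$ and $deg(p)\ge 3$; since $u,p$ are adjacent and both adjacent to $w$, the path $v-w-a-b$, where $\{a,b\}=\{u,p\}$ is ordered so that $deg(a)\le deg(b)$, has degrees $2\le 3\le deg(a)\le deg(b)$ and is a degree monotone path on four vertices. \emph{Second}, if there is a $3$-path $v,w,x$ with $deg(v)=deg(x)=2$ and $deg(w)=4$, then $w$ has exactly four neighbours $v,v',x',x$, where $v',x'$ complete the ears at $v$ and $x$ to triangles, and one checks that $v'x'$ is an edge. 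The key point is to show that, for $n\ge 6$, \emph{both} $v'$ and $x'$ have degree at least $4$: each is a neighbour of a degree-$2$ vertex and, by tracing the Hamiltonian cycle, must have a further cycle-neighbour outside $\{v,w,x,v',x'\}$ unless $n=5$. Granting this, the path $v-w-a-b$ with $\{a,b\}=\{v',x'\}$ ordered by degree has degrees $2\le 4\le deg(a)\le deg(b)$ and is again degree monotone on four vertices. Combined with the upper bound this gives part (1). I expect the degree bookkeeping in this second case --- showing both completing vertices have degree at least $deg(w)$, which is exactly where $n\ge 6$ is used --- to be the most delicate step.

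For parts (2) and (3) the task is to exhibit arbitrarily large MOPs with $mp(G)=4$; by part (1) only the upper bound $mp(G)\le 4$ must be verified. A useful reduction is that in a MOP a degree monotone path contains at most one vertex of degree $2$, and such a vertex must be an endpoint (otherwise its neighbour on the low side would also have degree $2$, giving two adjacent degree-$2$ vertices). Hence $mp(G)\le 1+L$, where $L$ is the largest number of vertices on a degree monotone path using only vertices of degree at least $3$, so it suffices to build MOPs whose degree-$\ge 3$ vertices carry no degree monotone path on four vertices. I would look for a periodic construction --- a triangulated strip to which pendant degree-$2$ vertices are attached in a repeating pattern --- so that the degrees along the core oscillate and every candidate monotone path is forced to reverse direction within three steps; the alternating six-vertex triangulation with degrees $4,2,4,2,4,2$ around the Hamiltonian cycle and an inner triangle on the three degree-$4$ vertices already has $mp=4$ and serves as the model to be replicated. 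For part (3) I would additionally require that no edge be regular, so that every monotone path is \emph{strictly} monotone and $mp(G)\le 4$ becomes the statement that the orientation of each edge from lower to higher degree, which is then a DAG, has longest directed path on four vertices. The main obstacle, and the part I expect to absorb most of the work, is producing the explicit family: because a MOP is a triangulated polygon, a vertex is typically joined not only to its neighbours but also to the apexes of adjacent triangles, so forbidding equal degrees on adjacent vertices forces the degree pattern to have period at least three along the strip, while simultaneously the pattern must bound every monotone run at four. Balancing these two constraints in a single arbitrarily long MOP is the crux, and I would verify the resulting family through the reduction above, checking case by case that its degree-$\ge 3$ core admits no monotone path on four vertices.
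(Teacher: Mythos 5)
Your treatment of part (1) is correct and essentially the paper's argument. The upper bound via ``at most one degree-$2$ vertex can lie on a degree monotone path'' is the same observation the paper uses. For the lower bound you invoke Hackman--Kemnitz and Lemma \ref{common_neighbour} exactly as the paper does; your Case 1 matches the paper's, and your Case 2 is in fact a slight streamlining (the paper splits off the subcase $deg(v')=3$ and reduces it to Case 1, whereas you argue directly that for $n\ge 6$ both completing vertices $v'$ and $x'$ have degree at least $4$ --- this is correct, since for $n\ge 6$ the second cycle-neighbour of $v'$ lies outside $\{v,w,x,v',x'\}$).

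Parts (2) and (3), however, contain a genuine gap: you never produce the arbitrarily large families, and these constructions \emph{are} the content of those parts. You give one six-vertex example, state the design constraints a periodic construction would have to satisfy, and then explicitly defer the crux (``producing the explicit family \ldots is the crux''). An existence claim cannot be established by describing what the witness should look like. Moreover, your proposed reduction --- build MOPs whose degree-$\ge 3$ core carries no monotone path on four vertices, so that $mp\le 1+3=4$ --- is sound as an inequality but is a strictly stronger requirement than what is needed, and it is not what the verified constructions satisfy: in the paper's family for part (2) the realizable degree sequences include $3,5,5,x$, i.e.\ the core \emph{does} contain four-vertex monotone paths, and $mp(G)=4$ holds for a different reason (every such core path terminates at the apex and cannot be extended downward past a second low-degree vertex). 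The paper's actual constructions start from the fan $F_r$ (apex $v$ of degree $r-1$, rim vertices $v_1,\dots,v_{r-1}$) and graft degree-$2$ vertices onto selected rim edges, with the spacing of the grafts chosen modulo $3$ (for part (2)) or modulo $4$ in staggered pairs (for part (3), which additionally kills all regular edges); one then enumerates the few possible degree sequences of monotone paths, e.g.\ $2,3,5,x$; $2,4,5,x$; $3,4,5,x$ in part (3). Until you exhibit and verify such a family (or another one), parts (2) and (3) are unproved.
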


\begin{proof}
\mbox{}\\
1. \indent We first consider the lower bound.

Let $G$ be a MOP, with unique Hamiltonian cycle $C$.  Then by Theorem \ref{HandK}, there is either an edge $vw$ with $deg(v)=2$ and $deg(w) \leq 3$, or a $3$-path $v,w,x$ with $deg(v)=2$, $deg(w)=4$ and $deg(x)=2$.  Let us consider these two cases separately:

\noindent \emph{Case 1}:  Consider the edge  $vw$ with $deg(v)=2$ and $deg(w) =3$, therefore $vw$ must be on $C$. Then $v$ and $w$ have a common neighbour $x$, and $wx$ must be a chord.  Hence $w$ and $x$ have another common neighbour by Lemma \ref{common_neighbour}.  Let this common neighbour be $y$.  Now since $n \geq 5$, $deg(x)$ and $deg(y)$ are at least 3.  If $deg(x) \leq deg(y)$, then $vwxy$ is a dmp of length 4.  If $deg(x) > deg(y)$, then $vwyx$ is a dmp of length 4.  In either case there is a dmp of length 4, closing this case.

\begin{figure}[h!]
\centering
\includegraphics{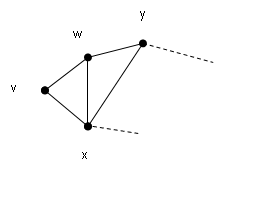}
\caption{Case 1}
\end{figure}

\medskip

\noindent \emph{Case 2}: Consider the $3$-path $v,w,x$ with $deg(v)=2$, $deg(w)=4$ and $deg(x)=2$.  Consider $vw$ which must be an edge on $C$.  Then $v$ and $w$ have a common neighbour $y$, and $wy$ must be a chord.  If $deg(y)=3$, then we have the edge $vy$ with $deg(v)=2$ and $deg(y)=3$, and hence we are in Case 1.  

So let us assume that $deg(y) \geq 4$.  Now since $deg(x)=2$, $x$ has another neighbour besides $w$ --- let this vertex be $z$.  Then $w$ is adjacent to $z$, because the neighbours of a vertex of degree 2 in a MOP must be adjacent, and $wz$ is a chord..  Now since $w$ is adjacent to $z$ and to $y$, $zy$ must be an edge.  Now since $n \geq 6$, $deg(z) \geq 4$.  Recall that $deg(w)=4$ and $deg(y) \geq 4$.  Then if $deg(y) \leq deg(z)$, then $vwyz$ is a dmp of length 4.  Otherwise $deg(y) > deg(z)$, then $vwzy$ is a dmp of length 4.  

\begin{figure}[h!]
\centering
\includegraphics{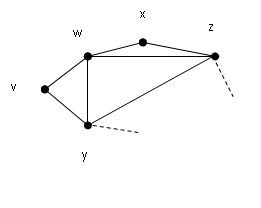}
\caption{Case 2}
\end{figure}

Hence, in both cases there is always a dmp of length at least 4.

For the upper bound, clearly for $n \geq 2$, $mp(G) \leq n-1$ since there are always at least two nonadjacent vertices of degree 2, and these cannot both be included in a degree monotone path.  The upperbound is realised by the MOP obtained by starting with a cycle graph $C_n$ on $n$ vertices, and choosing any vertex $v$ on the cycle and connecting it to every other vertex.  We call this MOP $F_n$, as shown in Figure \ref{fan}.  This is clearly a MOP, with the two neighbours of $v$ on the cycle having degree 2, $v$ having degree $n-1$ and all other vertices are of degree 3.  Starting from one of the vertices of degree 2, one can traverse all vertices of degree 3, and then move to $v$, giving a degree monotone path of length $n-1$.

\begin{figure}[H] \label{fan}
\centering
\includegraphics{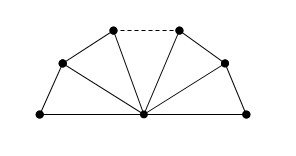}
\caption{$F_7$} \label{fan}
\end{figure}

\medskip
\noindent 2. \indent
The following construction gives a MOP with $mp(G)=4$ for $|V(G)| \geq 6$ and  $|V(G)| \pmod 4 \not=3$.  We start with the graph $F_r$, $ r \geq 5$, with vertex $v$ having degree $r-1$, and all the other vertices are labelled $v_i$, $1 \leq i \leq r-1$, with $v_1$ and $v_{r-1}$ being the neighbours of $v$ with degree 2.    To this graph we add another $\lfloor \frac{r}{3} \rfloor$  vertices of degree 2 as follows:
\begin{enumerate}
\item{ For $r=0,1\pmod 3$,  the first of these added vertices is connected to $v_1$ and $v_2$, the second  to $v_2$ and $v_3$, and so on, so that in general, the $i^{th}$ such vertex is connected to $v_{3i-2}$ and $v_{3i-1}$, for $1 \leq i \leq \lfloor \frac{r}{3} \rfloor$.}
\item {For $r=2 \pmod 3$,  the first of these added vertices is connected to $v_2$ and $v_3$, the second (if $r > 5$) to $v_5$ and $v_6$, and so on, so that in general, the $i^{th}$ such vertex is connected to $v_{3i-1}$ and $v_{3i}$, for $1 \leq i \leq \lfloor \frac{r}{3} \rfloor$.}
\end{enumerate}

Thus each such graph has $r+\lfloor \frac{r}{3} \rfloor$ vertices.  Figure \ref{Cwings}  shows such a graph  with $r=7$ and hence $|V(G)|=9$.  The possible degree sequences of degree monotone paths are $2,3,4,x$;  $2,5,5,x$ and $3,5,5,x$, where $x$ is the degree of the central vertex as shown in Figure \ref{Cwings}.

\begin{figure}[H]
\centering
\includegraphics{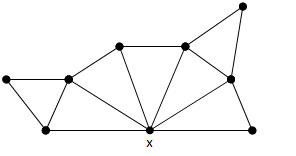}
\caption{$mp(G)=4$} \label{Cwings}
\end{figure}

\noindent 3. \indent

We now construct a family of MOPs with $mp(G)=4$ and with no regular edges.  The construction is quite similar to that given in part 2.  Again we start with $F_r$  , for $r \geq 7$ and label it as in part 2.  We now add  vertices as follows:
\begin{enumerate}
\item{ If $r= 0 \pmod 4$, we add $ \frac{r}{2} $ vertices --- these are added in pairs:  the first pair of vertices is added by connecting the first vertex to $v_1$ and $v_2$, and the second to $v_2$ and $v_3$. In general, a pair of vertices are added by connecting the first vertex to $v_{4i-3}$ and $v_{4i-2}$ and the second to $v_{4i-2}$ and $v_{4i-1}$  for $1 \leq i \leq \frac{r}{4}$.}
\item{ If $r= 1 \pmod 4$, we add $\lfloor \frac{r}{2} \rfloor$ vertices --- these are added in pairs, as for case 1.}
\item{ If $r= 2 \pmod 4$, we add $ 2\lfloor \frac{r}{4} \rfloor $ vertices --- these are added in pairs:  the first pair of vertices is added by connecting the first vertex to $v_2$ and $v_3$, and the second to $v_3$ and $v_4$. In general, a pair of vertices are added by connecting the first vertex to $v_{4i-2}$ and $v_{4i-1}$ and the second to $v_{4i-1}$ and $v_{4i}$  for $1 \leq i \leq \lfloor \frac{r}{4} \rfloor$.}
\item{ If $r= 3 \pmod 4$, we add $\lfloor \frac{r}{2} \rfloor$ vertices, which is an odd number.  The first $\lfloor \frac{r}{2} \rfloor-1 $ vertices are added in pairs:  the first pair of vertices is added by connecting the first vertex to $v_1$ and $v_2$, and the second to $v_2$ and $v_3$. In general, a pair of vertices are added by connecting the first vertex to $v_{4i-3}$ and $v_{4i-2}$ and the second to $v_{4i-2}$ and $v_{4i-1}$  for $1 \leq i \leq \lfloor \frac{r}{4} \rfloor$.  The last vertex is connected to the vertices $v_{r-2}$ and $v_{r-1}$.}
\end{enumerate}

Thus, for $r=0,1,2 \pmod 4$, each such graph has $r+\lfloor \frac{r}{2} \rfloor$ vertices, while for $r=3 \pmod 4$, the number of vertices is $r+2\lfloor \frac{r}{4} \rfloor$ .   One can see that $mp(G)=4$ and no edge is regular.   Figure \ref{YCwheels}  shows such a graph  with $r=9$ and hence $|V(G)|=13$.  The possible degree monotone sequences are $2,3,5,x$; $2,4,5,x$ and  $3,4,5,x$.  So the reader can see also the effect of non-regular edges, as these sequences are strictly monotone increasing, justifying the consideration of the case with no regular edge which forces any degree monotone path to be strictly monotone.
 
\begin{figure}[H]
\centering
\includegraphics{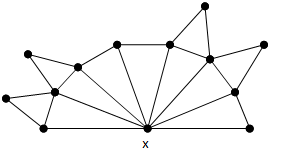}
\caption{$mp(G)=4$ with no regular edges} \label{YCwheels}
\end{figure}

\end{proof}

\subsection{Maximal Planar Graphs}

 For maximal planar graphs the situation is different as we have two constructions for arbitrarily large maximal planar graphs $G$ having $\chi(G)=mp(G)=4$.  
\medskip

\emph{Construction 1}
\medskip

The graph in Figure \ref{MP1} is maximal planar and $mp(G)=4$.  We start with $F_r$ for $r \geq 7$, and $r= 1 \pmod 3$, and label it as in Theorem \ref{dm4}.  We add another vertex and connect it to all the vertices of degree 2 and 3.  Let us call this vertex $y$.  We then add vertices of degree 3 as follows: the first vertex is connected to $y$, $v_1$ and $v_2$,  the next is connected to $v$,$v_4$ and $v_5$, and so on up to $v_{r-2}$.  So in general such a vertex is connected to $y$, $v_{3i-2}$ and $v_{3i-1}$, for $1 \leq i \leq   \frac{r-1}{3}$.  Finally, we connect $v_1$ to $v_{r-1}$.  The graph has $r+1 +\frac{r-1}{3} = \frac{4r+2}{3}$ vertices, and it is clearly maximal planar.  The longest possible degree monotone pathshave degree sequences $3,5,5,x$; $3,5,5,y$; $4,5,5,x $ and $4,5,5,y$,  where $x$ is the degree $r-1$, as labelled in Figure \ref{MP1}.  The situation is similar if one starts from a vertex of degree 4.  Hence $mp(G)=4$ and clearly $\chi(G)=4$, since the graph contains $K_4$.

\begin{figure}[H]
\centering
\includegraphics{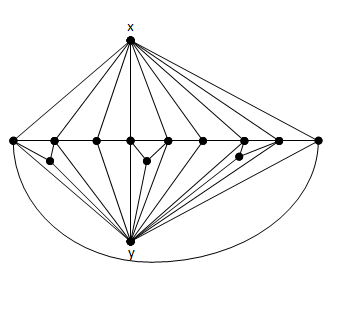}
\caption{Maximal planar graph with $\chi(G)=mp(G)=4$} \label{MP1}
\end{figure}

\emph{Construction 2}
\medskip

We start with a graph, which we label $A_i$ --- we take $C_4$ and connect two non-adjacent vertices, so that there are two vertices of degree 2, labelled $x_i$ and $y_i$, and two vertices of degree 3, labelled $z_i$ and $w_i$.  Then we take $k \geq 3$ copies of $A_i$ and join them together by merging $y_i$ with $x_{i+1}$ into a single vertex of degree 4.  Now we add two new vertices $v_1$,$v_2$ and $v_3$ --- we connect $v_1$ to $x_1$, $v_2$ and $v_3$; we connect $v_2$ to all $x_i$ and $z_i$, and to $y_k$;  finally we connect $v_3$ to all $x_i$ and $w_i$, and to $y_k$.

It is clear that this graph is maximal planar.  If we start a path from a vertex of degree 4, there is another vertex of degree 4 adjacent to it.  The possible degree sequences of degree monotone paths are $4,4,6,v_2$; $4,4,6,v_3$; $4,4,5,v_2$; $4,4,5,v_3$.  Thus $mp(G)=4$.  We now show that a proper colouring of the graph requires at least four colours.  So suppose we try to colour using colours 1,2 and 3.  Let the vertices $z_i$ take colour 1, and the vertices $w_i$ take colour 2.  The vertices $x_i$ and $y_i$ must take colour 3.  But then $v_2$ must take colour 2 and $v_3$ must take colour 1, and therefore $v_1$ has neighbours of all three colours, hence it must take a new colour 4.  Hence $\chi(G)=4$.

\begin{figure}[H]
\centering
\includegraphics{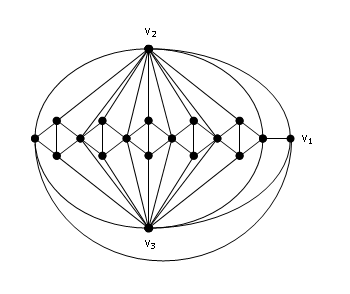}
\caption{Maximal planar graph with $\chi(G)=mp(G)=4$} \label{MP2}
\end{figure}

\section{Extremal Graphs for $mp(G)$ and Turan numbers}

We now turn our attention to $f(n,k)$.  Recall that we define $ f(n,k) =\max\{ |E(G)|: |V(G)| = n, mp(G) < k\}$, that is the maximum number of edges in a graph on $n$ vertices with no degree monotone path of length $k$, and that  the Turan number $t(n,k)  = \max \{|E(G)|: |V(G)| = n, G \mbox{ contains no copy of } K_k\}$.  Trivially $t(n,2)=f(n,2)=0$, hence we assume $k \geq 3$.

\begin{propskip} \label{f_leq_t}
For $k \geq 3$, $f(n,k) \leq t(n,k)$.
\end{propskip}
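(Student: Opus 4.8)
The plan is to show that the class of graphs admissible for the $f(n,k)$ maximization is contained in the class admissible for the Turán maximization, so that the optimum over the smaller class cannot exceed the optimum over the larger one. The single ingredient I need is the chain $mp(G) \geq \chi(G) \geq \omega(G)$, where the first inequality is Theorem \ref{mp_chi} and the second is the standard fact that the clique number is a lower bound for the chromatic number.

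Concretely, I would let $G$ be an extremal graph for $f(n,k)$, so that $|V(G)| = n$, $mp(G) < k$, and $|E(G)| = f(n,k)$. Combining the two inequalities gives $\omega(G) \leq \chi(G) \leq mp(G) < k$, hence $\omega(G) \leq k-1$. This says precisely that $G$ contains no copy of $K_k$. Therefore $G$ is a feasible competitor in the definition of the Turán number $t(n,k)$, and by the maximality defining $t(n,k)$ we get $f(n,k) = |E(G)| \leq t(n,k)$, as required. (For $k \geq 3$ the inequality $\chi(G) \geq \omega(G)$ and the definition of $t(n,k)$ are both meaningful, matching the hypothesis.)

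I do not expect a genuine obstacle here: the whole content is the observation that the condition $mp(G) < k$ is \emph{stronger} than $K_k$-freeness, via Theorem \ref{mp_chi}. The only thing to be careful about is the direction of the containment — it is the $mp$-condition that implies the clique condition and not conversely, which is exactly what makes $f(n,k)$ the smaller quantity. The reverse inequality, or a sharper relationship between $f(n,k)$ and $t(n,k)$, would require extremal constructions rather than this one-line deduction, and that is presumably where the real work of the section lies.
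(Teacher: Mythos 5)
Your proof is correct and is essentially the paper's argument: both rest on the chain $mp(G)\geq\chi(G)\geq\omega(G)$ from Theorem \ref{mp_chi}, with the paper phrasing it contrapositively (a graph with $t(n,k)+1$ edges contains $K_k$ and hence has $mp(G)\geq k$) while you phrase it as containment of the feasible classes. No gap; nothing further needed.
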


\begin{proof}
Suppose $G$ has $t(n,k)+1$ edges,  then by the definition of $t(n,k)$, $G$ contains $K_k$. Now we know that $mp(G) \geq \chi(G) \geq \omega(G) \geq k$, so there is a degree monotone path of length at least $k$.  Therefore $f(n,k) \leq t(n,k)$.
\end{proof}

\begin{propskip} \label{f_leq_t-1}
For $k \geq 4$ and $n \geq k+1$, $f(n,k) \leq t(n,k)-1$.
\end{propskip}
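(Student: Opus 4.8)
The plan is to upgrade Proposition \ref{f_leq_t} by ruling out its borderline case. That proposition already gives $f(n,k) \le t(n,k)$, so it suffices to prove that \emph{every} graph $G$ on $n$ vertices with exactly $t(n,k)$ edges satisfies $mp(G) \ge k$. Once this is shown, no graph attaining $t(n,k)$ edges can have $mp < k$, forcing the extremal graph for $f(n,k)$ down to at most $t(n,k)-1$ edges.

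So fix $G$ with $|V(G)| = n$ and $|E(G)| = t(n,k)$. If $G$ contains a copy of $K_k$, then exactly as in Proposition \ref{f_leq_t} we get $mp(G) \ge \omega(G) \ge k$ and we are done. The substantive case is when $G$ is $K_k$-free. Since $G$ is $K_k$-free and meets the Turán bound $t(n,k)$, the \emph{uniqueness} part of Tur\'an's theorem (valid as $n \ge k+1 > k-1$) forces $G$ to be the Tur\'an graph $T(n,k-1)$, the complete $(k-1)$-partite graph with parts as equal as possible. The task thus reduces to showing $mp(T(n,k-1)) \ge k$, which is exactly one more than the trivial bound $mp \ge \chi = k-1$ coming from Theorem \ref{mp_chi}.

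To establish $mp(T(n,k-1)) \ge k$ I would argue directly from the multipartite structure. Its parts have one of two sizes differing by at most one; say there are $a$ parts of the larger size and $b$ parts of the smaller size, with $a+b = k-1$ and $b \ge 1$. Since a vertex's degree is $n$ minus the size of its own part, all vertices in larger parts share one degree $d_L$ and all vertices in smaller parts share a strictly larger degree $d_S$. Picking one vertex from each part and listing the large-part vertices before the small-part ones yields a path on $k-1$ vertices with monotone degree sequence $d_L,\dots,d_L,d_S,\dots,d_S$. The crucial point, and where $k \ge 4$ enters, is that $a+b = k-1 \ge 3$ precludes $a \le 1$ and $b \le 1$ simultaneously, so either $a \ge 2$ or $b \ge 2$. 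If $a \ge 2$, prepend one further large-part vertex (degree $d_L$) to the front; if instead $b \ge 2$, append one further small-part vertex (degree $d_S$) to the end. In both cases the sequence remains monotone, producing a degree monotone path on $k$ vertices and hence $mp(G) \ge k$.

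The two steps needing care are (i) citing the \emph{uniqueness} in Tur\'an's theorem rather than only the edge count, and (ii) verifying that the extra vertex in the extension genuinely exists and keeps the walk a path, i.e.\ that the relevant part has a spare vertex and that consecutive vertices lie in distinct parts. The latter reduces to checking $s := \lfloor n/(k-1)\rfloor \ge 1$ in general, and $s \ge 2$ in the subcase where the append-small extension is used; both follow from $n \ge k+1$ by a one-line computation. I therefore expect step (i) to be the only genuinely essential ingredient, with the path construction in step (ii) amounting to routine bookkeeping. Finally, the hypothesis $k \ge 4$ is essential to the method: for $k=3$ the Tur\'an graph has only two parts with $a=b=1$, neither extension is available, and indeed $T(n,2)$ for odd $n$ is $K_3$-free with $t(n,3)$ edges yet $mp = 2$.
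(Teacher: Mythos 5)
Your proposal is correct and follows essentially the same route as the paper: both reduce, via the uniqueness part of Tur\'an's theorem, to showing that the complete $(k-1)$-partite Tur\'an graph has a degree monotone path on at least $k$ vertices, and both get this by observing that with $k-1\ge 3$ parts of at most two distinct sizes some size class must contain at least two parts, so one can take one vertex per part and a spare vertex from a repeated class. The paper's version of the same case split actually extracts a path on $k+1$ vertices, but the mechanism is identical.
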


\begin{proof}
For $k \geq 4$, the unique Turan graph is a complete $(k-1)$-partite graph with $k-1 \geq 3$ parts all of order $\lfloor \frac{n}{k-1} \rfloor$ or $\lceil \frac{n}{k-1} \rceil$.

Now since $k \geq 4$ and $n \geq k+1$, we must have one of the following two scenarios:

\noindent Case 1:  There are at least two parts of size $\lceil \frac{n}{k-1} \rceil \geq 2$.  We observe that the vertices in these parts have degree less than or equal to those in the parts of size $\lfloor \frac{n}{k-1} \rfloor$.  Let $V_1$ and $V_2$ be two classes of size $\lceil \frac{n}{k-1} \rceil $ -- Thus vertices in these two parts have the same degree, and we can take a degree monotone path of at least four vertices from these parts, and then we can continue taking consecutive vertices from all the other classes, by first taking vertices from the larger class, and then moving onto the smaller classes to get a degree monotne path with at least $k-3+4=k+1$ vertices, giving $mp(G) \geq k+1$.

\medskip

\noindent Case 2:  There is exactly one part of size $\lceil \frac{n}{k-1} \rceil$ and $k-2$ parts of size $\lfloor \frac{n}{k-1} \rfloor \geq 2$.  We can start with a vertex in the large part, and then move to the smaller parts, in which all vertices have the same degree and thus we can include all vertices to give a degree monotone path.  Then \[mp(G) \geq (k-2) \lfloor \frac{n}{k-1} \rfloor + 1 \geq  2(k-2)+1 = 2k-3 \geq k+1 \mbox{ since } k \geq 4.\]  Thus in both cases, $mp(G) \geq k+1$, and hence $f(n,k) \leq t(n,k)-1$.
\end{proof}

We now characterise $f(n,3)$.

\begin{proposition}
\mbox{} \begin{enumerate}
\item{$f(n,3)=\frac{n^2}{4}-1=t(n,3)-1$ for $n=0 \pmod 2$.}
\item{$f(n,3)=\frac{n^2-1}{4}=t(n,3)$ for $n= 1 \pmod 2$.}
\end{enumerate}
\end{proposition}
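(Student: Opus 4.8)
The plan is to combine the Turán bound $f(n,3)\le t(n,3)$ from Proposition \ref{f_leq_t} with explicit complete bipartite constructions, exploiting the structural rigidity forced by $mp(G)\le 2$. The key preliminary observation I would record is that any graph with $mp(G)\le 2$ is triangle-free: since $mp(G)\ge\omega(G)$ (by Theorem \ref{mp_chi} together with $\chi\ge\omega$), a triangle would give $\omega(G)\ge 3$ and hence $mp(G)\ge 3$. Consequently every admissible graph has at most $t(n,3)=\lfloor n^2/4\rfloor$ edges, which reproves the upper bound $f(n,3)\le t(n,3)$ in both parities and reduces the whole problem to triangle-free graphs, where Mantel's theorem is available.

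For the odd case I would simply exhibit the Mantel-extremal graph itself. Take $G=K_{(n-1)/2,(n+1)/2}$: the two sides have different sizes, so the two degree values $(n+1)/2$ and $(n-1)/2$ are distinct, every edge joins a vertex of the smaller side (degree $(n+1)/2$) to one of the larger side (degree $(n-1)/2$), and thus there are no regular edges. Any path on three vertices in a bipartite graph has the form $a\,b\,a'$ or $b\,a\,b'$, and in either case the middle vertex is a strict local degree extremum, so no such path is monotone; hence $mp(G)=2$. Since $|E(G)|=(n^2-1)/4=t(n,3)$, this matches the upper bound and establishes part 2.

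For the even case the same argument applied to $G=K_{n/2-1,\,n/2+1}$ produces a graph with $mp(G)=2$ and $|E(G)|=n^2/4-1$, giving $f(n,3)\ge n^2/4-1$. The substantive part is the matching upper bound, namely that no graph on an even number of vertices with $mp(G)\le 2$ can attain $n^2/4$ edges. Here I would invoke the \emph{uniqueness} in Mantel's theorem: for even $n\ge 4$ the only triangle-free graph with exactly $n^2/4$ edges is the balanced graph $K_{n/2,n/2}$. But $K_{n/2,n/2}$ is $(n/2)$-regular, so it contains a path on three vertices all of the same degree, which is a (constant, hence monotone) degree monotone path; thus $mp(K_{n/2,n/2})\ge 3$. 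Therefore a graph with $mp\le 2$ and $n^2/4$ edges cannot exist, forcing $f(n,3)\le n^2/4-1$ and completing part 1.

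The main obstacle is exactly this even-case upper bound: the crude estimate of Proposition \ref{f_leq_t} only yields $f(n,3)\le n^2/4$, and removing the last edge requires the finer fact that the extremal triangle-free graph is essentially unique \emph{and} that this unique extremal graph is regular, whence it violates $mp\le 2$. The contrast between the two parities is precisely the contrast between the balanced (regular, $mp\ge3$) and the unbalanced (two distinct degrees, $mp=2$) Mantel-extremal graphs. Some care is needed for small $n$: the balanced graph contains the required monotone $P_3$ only once $n\ge 4$, so the stated formula is intended in the range $n\ge 4$.
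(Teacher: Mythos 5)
Your proposal is correct and follows essentially the same route as the paper: the unbalanced complete bipartite graphs $K_{n/2+1,n/2-1}$ (even case) and $K_{(n+1)/2,(n-1)/2}$ (odd case) give the lower bounds, and the even-case upper bound comes from the fact that the unique triangle-free graph with $n^2/4$ edges is the balanced, regular $K_{n/2,n/2}$, which contains a constant-degree monotone path. Your explicit appeal to uniqueness in Mantel's theorem and the caveat about small $n$ merely make precise what the paper states more tersely.
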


\begin{proof}

For $n=0 \pmod 2$, the Turan graph which achieves $t(n,3)$ is $K_{\frac{n}{2},\frac{n}{2}}$, which has $\frac{n^2}{4}$ edges and $mp(G)=n$, since the graph is regular and Hamiltonian.   So let us consider the graph $K_{\frac{n}{2}+1,\frac{n}{2}-1}$ - this graph has $\frac{n^2}{4}-1$ edges and $mp(G)=2$.  Any graph with more edges either contains $K_3$, and hence a degree monotone path with at least three vertices, or is the Turan graph $K_{\frac{n}{2},\frac{n}{2}}$ for which $mp(G)=n$, hence the lower bound is sharp.

For $n = 1 \pmod 2$, the Turan graph which achieves $t(n,3)$ is $K_{\frac{n+1}{2},\frac{n-1}{2}}$, which has $\frac{n^2-1}{4}$ edges.  It is easy to see that $mp(K_{\frac{n+1}{2},\frac{n-1}{2}})=2$, hence even in this case the lower bound is sharp by Proposition \ref{f_leq_t}.
\end{proof}

\begin{proposition}\label{A_n}
Let $A_{n,k}$ be the family of sequences of integers  $1 \leq a_1 < a_2 < \ldots < a_{k-1}$ such that \[\sum_{i=1}^{i=k-1}a_i=n .\] Let \[g(n,k)=\max\{ \sum_{1 \leq i < j \leq k-1} a_i a_j:    \mbox{ over all sequences in  } A_{n,k}\}.\] Then $f(n,k) \geq g(n,k).$

\end{proposition}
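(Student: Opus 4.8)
The plan is to exhibit, for the optimal sequence defining $g(n,k)$, a single graph realising both the edge count $g(n,k)$ and the constraint $mp(G) < k$. Let $1 \leq a_1 < a_2 < \cdots < a_{k-1}$ be a sequence in $A_{n,k}$ attaining the maximum $g(n,k)$ (the family $A_{n,k}$ is finite, so the maximum is attained whenever $A_{n,k}$ is nonempty; otherwise the asserted inequality is vacuous). I would take $G$ to be the complete $(k-1)$-partite graph with parts $X_1, \ldots, X_{k-1}$ of sizes $|X_i| = a_i$. The number of edges of a complete multipartite graph equals the number of pairs of vertices lying in different parts, namely $\sum_{1 \leq i < j \leq k-1} a_i a_j$, so $|E(G)| = g(n,k)$ by the choice of the sequence.

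It then remains to verify that $mp(G) < k$, i.e. $mp(G) \leq k-1$. Since $G$ is complete $(k-1)$-partite with all parts nonempty, $\chi(G) = k-1$, so Theorem~\ref{mp_chi} already gives $mp(G) \geq k-1$; the work is in the matching upper bound. The key observation is that every vertex of $X_i$ has degree $n - a_i$, and because the sizes $a_1, \ldots, a_{k-1}$ are pairwise distinct, the $k-1$ degree values $n - a_i$ are pairwise distinct as well. Consecutive vertices of any path lie in distinct parts (there are no edges inside a part), hence consecutive vertices of any path carry distinct degrees.

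The main point is now that a monotone integer sequence whose consecutive terms are always distinct must in fact be strictly monotone, and a strictly monotone sequence never repeats a value. Consequently, along any degree monotone path of $G$ the vertices occupy pairwise distinct parts; since there are only $k-1$ parts, such a path has at most $k-1$ vertices. This yields $mp(G) \leq k-1$, and combined with the lower bound, $mp(G) = k-1 < k$. Hence $G$ is admissible in the definition of $f(n,k)$, and $f(n,k) \geq |E(G)| = g(n,k)$.

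The only delicate step is the upper bound $mp(G) \leq k-1$; everything else is bookkeeping. I expect the cleanest phrasing to be exactly the strict-monotonicity argument above, which mirrors (and sharpens) the extremal construction in the proof of Theorem~\ref{mp_chi}, where the same family of complete multipartite graphs with pairwise distinct part sizes was shown to have $mp$ equal to the number of parts.
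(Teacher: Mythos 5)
Your proof is correct and follows essentially the same route as the paper: construct the complete $(k-1)$-partite graph $K_{a_1,\ldots,a_{k-1}}$ from an optimal sequence, note that its edge count is $\sum_{1\le i<j\le k-1} a_i a_j = g(n,k)$, and use the fact that the degrees $n-a_i$ are pairwise distinct across parts to conclude $mp(G)=k-1<k$. The only difference is that you carefully spell out (via strict monotonicity of a monotone sequence with distinct consecutive terms) the step the paper dismisses with ``a degree monotone path can clearly have exactly one vertex from each class,'' which is a welcome clarification rather than a new approach.
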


\begin{proof}
From each  sequence in $A_{n,k}$ we construct a complete $(k-1)$-partite graph $K_{a_1,a_2,\ldots,a_{k-1}}$. The number of edges of this graph is precisely \[\sum_{1 \leq i < j \leq k-1} a_i a_j.\]  The degrees of the vertices in class $a_j$ are precisely $n-a_j$, so distinct classes have different vertex degrees.  A degree monotone path can clearly have exactly one vertex from each class, hence $mp(G) = k-1$.

Thus the class of such graphs gives the lower bound for $f(n,k)$, proving the proposition.
\end{proof}

Using Propositions \ref{f_leq_t} and \ref{A_n},  we can now show that the values of $f(n,k)$ and $t(n,k)$ are in fact quite close.  It is known that $t(n,k) \leq \frac{n^2(k-2)}{2(k-1)}$  (see \cite{bollobas2004extremal}).

\begin{theorem} \label{c_1_c_2}
For $k \geq 3$ and $n \geq \frac{(k-1)(k+2)}{2}$, $ t(n,k)-c(k) \leq g(n,k) \leq f(n,k) \leq t(n,k)$, where \[c(k) \leq \frac{k^3+5k+3}{24}.\]
\end{theorem}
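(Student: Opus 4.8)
The plan is to establish only the leftmost inequality, since $g(n,k)\le f(n,k)\le t(n,k)$ is already given by Propositions \ref{A_n} and \ref{f_leq_t}. Write $m=k-1$. The key observation is that both extremal quantities are governed by the sum of squares of the part sizes: a complete $m$-partite graph with parts $a_1,\dots,a_m$ (so $\sum_i a_i=n$) has exactly $\frac12\bigl(n^2-\sum_i a_i^2\bigr)$ edges. By Tur\'an's theorem $t(n,k)=\frac12(n^2-S_T)$, where $S_T$ is the minimum of $\sum_i a_i^2$ over all partitions of $n$ into $m$ positive parts (attained by the balanced partition), while by the definition of $g$ we have $g(n,k)=\frac12(n^2-S_g)$, where $S_g$ is the same minimum taken only over partitions into $m$ \emph{distinct} positive parts. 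Hence $t(n,k)-g(n,k)=\frac12\,(S_g-S_T)$, and it suffices to bound this difference, i.e.\ to show $S_g-S_T\le\frac{k^3+5k+3}{12}$.

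The main tool is the variance decomposition $\sum_i a_i^2=\frac{n^2}{m}+\sum_i\bigl(a_i-\tfrac nm\bigr)^2$. For the balanced partition this gives $S_T=\frac{n^2}{m}+\frac{r(m-r)}{m}\ge\frac{n^2}{m}$, where $r=n\bmod m$. Consequently, for \emph{any} single admissible distinct partition with spread $D=\sum_i(a_i-\tfrac nm)^2$ we have $S_g\le\frac{n^2}{m}+D$ and therefore $S_g-S_T\le D$. Thus I never need to identify the optimal distinct partition; it is enough to exhibit one near-balanced distinct partition and control $D$.

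For the construction I would take an almost-consecutive block: set $c=\lfloor(n-\binom m2)/m\rfloor$ and $\delta=n-mc-\binom m2\in\{0,\dots,m-1\}$, start from $c,c+1,\dots,c+m-1$, and raise the largest $\delta$ parts by $1$ to correct the sum to exactly $n$ while keeping the parts distinct and positive. The hypothesis $n\ge\frac{(k-1)(k+2)}{2}$ comfortably ensures that the smallest part $c$ is positive, so the construction is admissible. Using the classical fact that the sum of squared deviations of $m$ consecutive integers about their mean equals $\frac{m(m^2-1)}{12}$, together with the effect of the $\delta$ unit shifts, a short computation collapses to the clean closed form $D=\frac{m(m^2-1)}{12}+\frac{(m+1)\,\delta(m-\delta)}{m}$. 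Since $\delta(m-\delta)\le m^2/4$, this yields $D\le\frac{m(m^2-1)}{12}+\frac{m(m+1)}{4}=\frac{m(m+1)(m+2)}{12}=\frac{(k-1)k(k+1)}{12}$, whence $t(n,k)-g(n,k)\le\frac{(k-1)k(k+1)}{24}=\frac{k^3-k}{24}\le\frac{k^3+5k+3}{24}$, which is the desired bound (in fact slightly stronger, and tight already at $k=3$).

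The step I expect to be the main obstacle is the exact evaluation of $D$ for the shifted block: one must track how the $\delta$ unit increments applied to the top parts change the sum of squares and verify that, after substituting $c$ in terms of $n,m,\delta$, everything telescopes into the stated closed form. The positivity check and the identification of $t$ and $g$ with the two constrained minima of $\sum_i a_i^2$ are routine by comparison. If one wished to recover exactly the constant $\frac{k^3+5k+3}{24}$ rather than the sharper $\frac{k^3-k}{24}$, one could retain the $-\frac{r(m-r)}{m}$ term or use a cruder estimate for $D$; but since $\frac{k^3-k}{24}\le\frac{k^3+5k+3}{24}$, the clean computation already suffices.
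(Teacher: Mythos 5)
Your argument is correct and, although it uses essentially the same extremal construction as the paper, the way you evaluate it is a genuinely different and cleaner route that in fact yields the sharper constant $c(k)\le\frac{k^3-k}{24}\le\frac{k^3+5k+3}{24}$. The paper also takes a block of $k-1$ consecutive integers and adjusts $t$ of them by one unit to hit the sum $n$ (its ``subtract $1$ from the $t$ smallest of $r+1,\dots,r+k-1$'' produces the same family of partitions as your ``raise the top $\delta$ of $c,\dots,c+m-1$'': namely $k$ consecutive integers with one omitted). But the paper then expands $\frac{n^2}{2}-\frac12\sum a_i^2$ by brute force, splits into the cases $t=0$ and $1\le t\le k-2$, substitutes $r=\frac{n+t}{k-1}-\frac k2$, and compares against the \emph{continuous} upper bound $t(n,k)\le\frac{n^2(k-2)}{2(k-1)}$, finally minimizing a quadratic in $t$ at $t=k/2$ to extract $\frac{k^3+5k+3}{24}$. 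You instead observe that both $t(n,k)$ and $g(n,k)$ equal $\frac12(n^2-S)$ for the constrained minima $S_T$, $S_g$ of $\sum_i a_i^2$, use the variance identity together with $S_T\ge n^2/m$, and reduce everything to bounding the spread $D$ of a single admissible partition; this avoids the case split, compares against the exact Tur\'an number rather than its continuous majorant, and your closed form $D=\frac{m(m^2-1)}{12}+\frac{(m+1)\delta(m-\delta)}{m}$ does check out (the cross term collapses because the sum of the top $\delta$ centered deviations of a consecutive $m$-block equals $\delta(m-\delta)/2$), after which $\delta(m-\delta)\le m^2/4$ gives $D\le\frac{m(m+1)(m+2)}{12}$. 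The admissibility checks (smallest part at least $1$ under $n\ge\frac{(k-1)(k+2)}{2}$, distinctness after the bumps) are immediate, so the step you flagged as the main obstacle is the only nontrivial computation and it goes through.
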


\begin{proof}
We assume $n \geq \frac{(k-1)(k+2)}{2}$ ---  let us choose $r$ so that  \[\sum_{i=0}^{k-2} r+i < n \leq \sum_{i=1}^{k-1} r+i =\frac{(2r+k)(k-1)}{2}.\]  Let $t=\frac{(2r+k)(k-1)}{2}-n$ and note that \[r=\frac{\frac{(2(n+t)}{k-1}-k}{2}= \frac{n+t}{k-1}- \frac{k}{2} \geq 1\] by the choice of $n$.  Now let us subtract 1 from the $t$ smallest values of the sequence on the right hand side of the above equation, and call the resulting terms $a_i$. Then $a_i=r+i-1$ for $ 1 \leq i \leq t$ and $a_i=r+i$ for $t+1 \leq i \leq k-1$, and  \[\sum_{i=1}^{k-1} a_i=n.\]   We now consider the $(k-1)$-partite graph with parts of sizes $a_1,a_2,\ldots,a_{k-1}$.  The number of edges of the graph is given by \[|E(G)|=\frac{\sum_{v \in G}deg(v)}{2}= \frac{\sum_{i=1}^{k-1}a_i(n-a_i)}{2}\]\[=n\frac{\sum_{i=1}^{k-1}a_i}{2} - \frac{\sum_{i=1}^{k-1}a_i^2}{2}= \frac{n^2}{2}-\frac{\sum_{i=1}^{k-1}a_i^2}{2}.\]  Now let us consider different values of $t$.

\noindent Case 1. \indent $t=0$.

\[\frac{n^2}{2}-\frac{\sum_{i=1}^{k-1}a_i^2}{2}=\frac{n^2}{2}-\frac{\sum_{i=1}^{k-1}(r+i)^2}{2}=\frac{n^2}{2}-\frac{\sum_{i=1}^{k-1}(r^2 + 2ir + i^2)}{2}\]\[=\frac{n^2}{2} - \frac{(k-1)r^2}{2}-\frac{rk(k-1)}{2}-\frac{k(k-1)(2k-1)}{12}\]\[=\frac{n^2}{2} - \frac{k-1}{2} \left [r^2+rk+\frac{k(2k-1)}{6} \right].\]  Now, when $t=0$, $r=\frac{n}{k-1}-\frac{k}{2}$, hence we get 

\[\frac{n^2}{2} - \frac{k-1}{2} \left [r^2+rk+\frac{k(2k-1)}{6} \right]=\frac{n^2}{2} - \frac{k-1}{2} \left [r(r+k)+\frac{k(2k-1)}{6} \right]\]
\[=\frac{n^2}{2} - \frac{k-1}{2} \left [\left( \frac{n}{k-1}-\frac{k}{2} \right) \left (\frac{n}{k-1}-\frac{k}{2}+k \right )+\frac{k(2k-1)}{6} \right]\]
\[=\frac{n^2(k-2)}{2(k-1)} -\frac{k(k-1)(k-2)}{24} \]


\bigskip
\noindent Case 2. \indent $1 \leq t \leq k-2$.

\[\frac{n^2}{2}-\frac{\sum_{i=1}^{k-1}a_i^2}{2}=\frac{n^2}{2}-\frac{\sum_{i=1}^{t}(r+i-1)^2}{2}-\frac{\sum_{i=t+1}^{k-1}(r+i)^2}{2}\]\[=\frac{n^2}{2}-\frac{\sum_{i=1}^{t}(r^2+2r(i-1) +(i-1)^2)}{2} - \frac{\sum_{i=t+1}^{k-1}(r^2+2ri+i^2)}{2}\]\[=\frac{n^2}{2}-\frac{tr^2}{2} - \frac{rt(t-1)}{2}-\frac{t(t-1)(2t-1)}{12}-\frac{(k-t-1)r^2}{2}-r\left(\frac{k(k-1)}{2}- \frac{t(t+1)}{2}\right)- \frac{\sum_{i=t+1}^{k-1}i^2}{2}\]\[=\frac{n^2}{2}-\frac{r^2(k-1)}{2} +rt - \frac{rk(k-1)}{2} -\frac{t(t-1)(2t-1)}{12}- \frac{k(k-1)(2k-1)}{12} + \frac{t(t+1)(2t+1)}{12}\]\[=\frac{n^2}{2} - \frac{r(k-1)(r+k)}{2} +rt - \frac{k(k-1)(2k-1)}{12} +\frac{t}{12}[2t^2+3t+1-(2t^2-3t+1)]\]\[=\frac{n^2}{2} - \frac{r(k-1)(r+k)}{2} +rt - \frac{k(k-1)(2k-1)}{12} +\frac{t^2}{2}\]

 Now replacing $r$ by $\frac{n+t}{k-1}-\frac{k}{2}$, we get
\[=\frac{n^2}{2} - \frac{(k-1)}{2}\left[\left(\frac{n+t}{k-1}-\frac{k}{2}\right)\left(\frac{n+t}{k-1}-\frac{k}{2}+k\right)\right] +t\left(\frac{n+t}{k-1}-\frac{k}{2} \right)- \frac{k(k-1)(2k-1)}{12} +\frac{t^2}{2}\] 
\[=\frac{n^2(k-2)}{2(k-1)}-\frac{k(k-1)(k-2)}{24}+\frac{kt^2-k^2t+1}{2(k-1)}\]

Now this expression is minimum when $t=\frac{k}{2}$, so this gives \[=\frac{n^2(k-2)}{2(k-1)}-\frac{k(k-1)(k-2)}{24}+\frac{k(\frac{k}{2})^2-k^2(\frac{k}{2})+1}{2(k-1)}\]
\[=\frac{n^2(k-2)}{2(k-1)} -\frac{k^3+5k+3}{24}+\frac{9}{24(k-1)}\]\[\geq \frac{n^2(k-2)}{2(k-1)} -\frac{k^3+5k+3}{24} \mbox{, since } k \geq 2\]

Therefore \[|E(G)| \geq t(n,k) -\frac{k^3+5k+3}{24} \] for any value of $t$  and the theorem is proved.
\end{proof}

\section{Nordhaus-Gaddum results for $mp(G)$}

We now turn our attention to the value of $mp(G)$ and $mp(\overline{G})$, and present a Nordhaus-Gaddum type of result.

\begin{proposition}

Let $G$ be a graph on $n$ vertices.
\begin{enumerate}
\item{For every such graph, $2\sqrt{n} \leq mp(G) + mp(\overline{G}) \leq 2n$.}
\item{There exist such graphs for which $mp(G) + mp(\overline{G}) =2n$.}
\item{There exist such graphs for which $mp(G) + mp(\overline{G}) =\frac{5\sqrt{n}}{2}$.}
\end{enumerate}
\end{proposition}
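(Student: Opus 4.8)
The plan is to prove the three parts in sequence, relying heavily on the results of Problem~1. For the upper bound in part~1, I would observe that for any graph $H$ on $n$ vertices the longest degree monotone path has length at most $n$, since a path visits each vertex at most once; hence $mp(G)+mp(\overline{G})\le 2n$ trivially. The lower bound $2\sqrt{n}\le mp(G)+mp(\overline{G})$ is the interesting direction. The natural tool is the Nordhaus-Gaddum inequality for the chromatic number, namely $\chi(G)+\chi(\overline{G})\ge 2\sqrt{n}$ (this follows from $\chi(G)\chi(\overline{G})\ge n$ together with the AM-GM inequality). Combining this with Theorem~\ref{mp_chi}, which gives $mp(G)\ge\chi(G)$ and $mp(\overline{G})\ge\chi(\overline{G})$, immediately yields $mp(G)+mp(\overline{G})\ge\chi(G)+\chi(\overline{G})\ge 2\sqrt{n}$.

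For part~2, I would exhibit an explicit graph attaining the upper bound. The cleanest candidate is $G=K_n$ (or any graph that is both Hamiltonian-traceable and regular together with its complement behaving well). Since $K_n$ is regular, every Hamiltonian path is degree monotone, so $mp(K_n)=n$; but then $\overline{G}$ is the empty graph, giving $mp(\overline{G})=1$, which fails. A better choice is a regular graph $G$ that is Hamiltonian and whose complement $\overline{G}$ is also regular and Hamiltonian: for a regular graph $mp$ equals the length of the longest path, so both $mp(G)=n$ and $mp(\overline{G})=n$, giving the sum $2n$. Thus I would take $G$ to be, for instance, the cycle $C_n$ (whose complement is regular and Hamiltonian for suitable $n$) or more safely an appropriate circulant graph, and verify that both $G$ and $\overline{G}$ are regular and contain a Hamiltonian path.

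For part~3, the target value $\frac{5\sqrt{n}}{2}$ suggests a construction where one of the two graphs contributes roughly $2\sqrt{n}$ and the other roughly $\frac{\sqrt{n}}{2}$, or where the extremal graph for the $\chi(G)\chi(\overline{G})=n$ bound is refined. I would look for a graph built on $n$ vertices arranged so that $G$ consists of disjoint cliques of size $\sqrt{n}$ (a disjoint union of $\sqrt{n}$ copies of $K_{\sqrt{n}}$), which makes $mp(G)=\sqrt{n}$ by regularity within components, while its complement is a complete multipartite graph $K_{\sqrt{n},\dots,\sqrt{n}}$ whose $mp$ can be computed via the construction in Theorem~\ref{mp_chi}. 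The complement is $\sqrt{n}$-partite with equal parts, so its chromatic number is $\sqrt{n}$ and, being regular, its longest path contributes more; balancing these contributions should produce the coefficient $\frac52$.

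The main obstacle I anticipate is in part~3: computing $mp$ precisely for the complement of the disjoint-clique graph. Because the balanced complete multipartite graph is regular, $mp$ there equals its longest path length, which is large (near $n$), so a naive disjoint-clique construction will overshoot $\frac{5\sqrt{n}}{2}$. The real work is to engineer a graph where \emph{both} $mp(G)$ and $mp(\overline{G})$ are forced to stay near $\sqrt{n}$ simultaneously, which requires breaking regularity carefully so that the degree-monotone constraint (not just the path-length constraint) becomes binding in at least one of the two graphs. I would therefore search for a construction with a controlled, spread-out degree sequence in both $G$ and $\overline{G}$ — likely a specific blow-up or threshold-type graph — and the delicate calculation verifying that the degree monotonicity caps each $mp$ value at the required order is where the argument will demand the most care.
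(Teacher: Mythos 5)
Your parts~1 and~2 are correct and follow the paper's own route exactly: the upper bound $2n$ is trivial, the lower bound comes from $mp\ge\chi$ combined with Nordhaus--Gaddum, and for sharpness of the upper bound you take $C_n$ together with its $(n-3)$-regular complement, both Hamiltonian (by Dirac for the complement when $n$ is large enough), so that in each graph regularity makes the Hamiltonian path degree monotone and each summand equals $n$. This is precisely the paper's argument.

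Part~3, however, has a genuine gap: you correctly identify that a disjoint union of \emph{equal-sized} cliques fails (its complement is a balanced complete multipartite graph, which is regular and Hamiltonian, so $mp(\overline{G})$ is near $n$), but you stop at diagnosing the obstacle and do not produce the construction. The missing idea is to take the cliques of pairwise \emph{distinct} sizes. Concretely (for $n$ an even square), take $\sqrt{n}$ vertex-disjoint cliques whose orders run through $\frac{\sqrt{n}}{2},\frac{\sqrt{n}}{2}+1,\dots,\frac{3\sqrt{n}}{2}$ with the value $\sqrt{n}$ omitted; these orders sum to $(\sqrt{n}+1)\sqrt{n}-\sqrt{n}=n$. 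Then $mp(G)$ is the order of the largest clique, namely $\frac{3\sqrt{n}}{2}$, while $\overline{G}$ is a complete $\sqrt{n}$-partite graph whose parts all have different sizes and hence whose vertices have a different degree in each part; a degree monotone path in $\overline{G}$ can therefore contain at most one vertex per part (two equal-degree vertices lie in the same part and are non-adjacent, and nothing of a different degree can sit between them), so $mp(\overline{G})=\sqrt{n}$ exactly. The sum is $\frac{3\sqrt{n}}{2}+\sqrt{n}=\frac{5\sqrt{n}}{2}$. Note also that your stated goal of forcing \emph{both} quantities near $\sqrt{n}$ is slightly off target: the construction deliberately leaves one summand at $\frac{3\sqrt{n}}{2}$ and pins the other at exactly $\sqrt{n}$; it is the distinctness of the part sizes in the complement, not a delicate degree spread in both graphs, that makes the monotonicity constraint binding.
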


\begin{proof}
\mbox{}\\

\noindent 1. \indent Clearly, $mp(G) \leq n$, and hence $mp(G) + mp(\overline{G}) \leq 2n$.  Also, by Proposition \ref{mp_chi} and the Nordhaus-Gaddum theorem \cite{nordhaus1956complementary}, $mp(G) + mp(\overline{G}) \geq \chi(G) + \chi(\overline{G}) \geq 2\sqrt{n}$.
\bigskip

\noindent 2. \indent We show that the upperbound is sharp --- for $n \geq 5$, we observe the $K_n$ has a Hamiltonian cycle $C$, such that when we delete $C$, we are left with a regular graph $G$ of degree $(n-1)-2=n-3 \geq \frac{n}{2}$.  Hence $G$ is Hamiltonian by Dirac's Theorem (in \cite{dirac1952some}).  It follows that $mp(C)+mp(G) = mp(\overline{G}) +  mp(G) = n + n = 2n$, as required.
\bigskip

\noindent 3. \indent  Let $n$ be an even square number.  Let $G$ be the graph on $n$ vertices made up of vertex disjoint cliques of size $\frac{\sqrt{n}}{2}$ up to $\frac{3\sqrt{n}}{2}$, except for the order $\sqrt{n}$.  Clearly $mp(G)$ is equal to the size of the largest clique, that is $mp(G)=\frac{3\sqrt{n}}{2}$.

The complement graph $\overline{G}$, is a complete $\sqrt{n}$-partite graph with vertices in distinct parts having different degrees, and hence $mp(\overline{G})=\sqrt{n}$.

Therefore, $ mp(G)+mp(\overline{G})= \sqrt{n}+ \frac{3\sqrt{n}}{2} = \frac{5\sqrt{n}}{2}$.

If $n$ is an odd square number, we take a similar construction with disjoint cliques of size $\left \lceil \frac{\sqrt{n}}{2} \right \rceil$ up to $\left \lfloor \frac{3\sqrt{n}}{2} \right \rfloor$ (including $\sqrt{n}$).  So again $mp(G)=\left \lfloor \frac{3\sqrt{n}}{2} \right \rfloor$.

The complement is again a complete $\sqrt{n}$-partite graph with vertices in distinct parts having different degrees, and hence $mp(\overline{G})=\sqrt{n}$.

Therefore, $ mp(G)+mp(\overline{G})= \sqrt{n}+ \left \lfloor \frac{3\sqrt{n}}{2} \right \rfloor= \left \lfloor\frac{5\sqrt{n}}{2} \right \rfloor$.

\end{proof}

\section{Conclusion}

The results which we have presented lead to a few open problems.

We have been able to show that, for all maximal outerplanar graphs $G$, $mp(G) \geq 4 > 3 =\chi(G)$, therefore improving on the lowerbound obtained using the Gallai-Roy Theorem.  Our knowledge for maximal planar graphs $G$ is, however, not so complete.  Our examples show that the lower bound can be attained, but they all have chromatic number equal to 4.  The existence of arbitrarily large maximal planar graphs $G$ with $mp(G)=3=\chi(G)$ is still open.

When we were investigating the relationship between $f(n,k)$ and the Turan numbers we defined the parameter $g(n,k)$ defined over all sequences of distinct $k-1$ integers summing to $n$, and we showed that $f(n,k) \geq g(n,k)$.  Although we have not been able to show that equality holds, we do conjecture that in fact, $f(n,k)=g(n,k)$ for $n \geq \frac{(k-1)(k+2)}{2}$.

We have also obtained bounds on $mp(G)$ of Norhaus-Gaddum type, but while we were able to show that the upper bound is attained, we do not know whether the lower bound of $2\sqrt{n}$ for $mp(G) + mp(\overline{G})$ is sharp.

Finally, we mentioned in the introduction, a fourth question dealing with $mp(G)$:  how does the value of $mp(G)$ change when $G$ is subjected to various graph operations such as edge/vertex deletion or addition?  A forthcoming paper \cite{CLZ5dmp} will deal with this question.
\bibliographystyle{plain}
\bibliography{mopbib}
\end{document}